\newtheorem{thm}{Theorem}[section]
\newtheorem{cor}[thm]{Corollary}
\newtheorem{lem}[thm]{Lemma}
\theoremstyle{remark}
\newtheorem*{rem}{Remark}
\newtheorem*{defi}{Definition}
\newcommand{\cB}{\mathcal{B}}
\newcommand{\cF}{\mathcal{F}}
\newcommand{\cN}{\mathcal{N}}
\newcommand{\cX}{\mathcal{X}}
\newcommand{\dC}{\mathbb{C}}
\newcommand{\dE}{\mathbb{E}}
\newcommand{\dP}{\mathbb{P}}
\newcommand{\dR}{\mathbb{R}}
\newcommand{\dV}{\mathbb{V}}
\newcommand{\cov}{\dC\textnormal{ov}}
\newcommand{\de}{\mathrm{e}}
\newcommand{\veps}{\varepsilon}
\renewcommand{\vec}{\textnormal{vec}}
\newcommand{\im}{\textnormal{Im}}
\newcommand{\diag}{\textnormal{diag}}
\newcommand{\hsp}{\hspace{0.5cm}}
\newcommand{\tr}{^{\, T}}
\newcommand{\nm}{\vert\hspace{-0.15ex}\vert\hspace{-0.15ex}\vert}
\newcommand{\nmg}{\Big\vert\hspace{-0.15ex}\Big\vert\hspace{-0.15ex}\Big\vert}
\newcommand{\limn}{\lim_{n\, \rightarrow\, +\infty}}
\newcommand{\cst}{C_{st}}
\newcommand{\ind}{\mathbbm{1}}
\newcommand{\wh}{\widehat}
\newcommand{\whn}{\wh{\theta}_{n}}
\def\leq{\leqslant}
\def\geq{\geqslant}
\numberwithin{equation}{section}
\begin{document}

\title[Moderate deviations in nearly unstable processes]{Moderate deviations in a class of stable but nearly unstable processes}

\author{Fr\'ed\'eric Pro\"ia}
\address{Laboratoire angevin de recherche en math\'ematiques, LAREMA, UMR 6093, CNRS, UNIV Angers, SFR MathSTIC, 2 Bd Lavoisier, 49045 Angers Cedex 01, France.}
\email{frederic.proia@univ-angers.fr}

\keywords{Nearly unstable autoregressive process, Moderate deviation principle, OLS estimation, Asymptotic behavior, Unit root.}

\begin{abstract}
We consider a stable but nearly unstable autoregressive process of any order. The bridge between stability and instability is expressed by a time-varying companion matrix $A_{n}$ with spectral radius $\rho(A_{n}) < 1$ satisfying $\rho(A_{n}) \rightarrow 1$. In that framework, we establish a moderate deviation principle for the empirical covariance only relying on the elements of $A_{n}$ through $1-\rho(A_{n})$ and, as a by-product, we establish a moderate deviation principle for the OLS estimator when $\Gamma$, the renormalized asymptotic variance of the process, is invertible. Finally, when $\Gamma$ is singular, we also provide a compromise in the form of a moderate deviation principle for a penalized version of the estimator. Our proofs essentially rely on truncations and deviations of $m_{n}$--dependent sequences, with an unbounded rate $(m_{n})$.
\end{abstract}

\maketitle

\section{Introduction and Assumptions}
\label{SecIntro}

Unit root issues have long been crucial in time series econometrics and have therefore focused a great deal of research studies. This sudden demarcation between stability and instability is responsible for many inference problems in linear time series (see Brockwell and Davis \cite{BrockwellDavis91} for a detailed overview of the linear stochastic processes). The remarkable works of Chan and Wei \cite{ChanWei88} encompass, in a much more general context, the now well-known fact that the least squares estimator is $\sqrt{n}$--consistent with Gaussian behavior when the underlying autoregressive process is stable, whereas it is $n$--consistent with asymmetrical distribution when the process is unstable. This rather abrupt change in the rate of convergence and in the asymptotic distribution certainly motivated the wide range of unit root testing procedures, but it also paved the way for studies based on time-varying coefficients. In a nearly unstable autoregressive process, we do not focus on a parameter $\theta$ satisfying $\vert \theta \vert < 1$ or $\vert \theta \vert = 1$ but, instead, the parameter is considered as a sequence $(\theta_{n})$ such that $\vert \theta_{n} \vert < 1$ and $\vert \theta_{n} \vert \rightarrow 1$ as $n \rightarrow +\infty$. This sample size dependent structure allows a continuity between stability and instability. For example, Phillips and Magdalinos \cite{PhillipsMagdalinos07} treat the case where the coefficient is in a $O(\kappa_{n}^{-1})$ neighborhood of the unit root with $\kappa_{n} = n^{\alpha} = o(n)$. Amongst other results, they prove a central limit theorem for the estimator at the rate $\sqrt{n\, \kappa_{n}}$, thereby making a bridge between the stable rate $\sqrt{n}$ and the unstable rate $n$. In the same vein, let us also mention the work of Chan and Wei \cite{ChanWei87}, natural generalizations like the study of Phillips and Lee \cite{PhillipsLee15} related to vector autoregressions, or the recent unified theory of Buchmann and Chan \cite{BuchmannChan13}, focused on nearly unstable autoregressive processes. Our paper is precisely based on the latter topic, in a sense that will be precised in good time.

\smallskip

Given a parametric generating process, the precision of the estimation is usually  assessed by its rate of convergence and the deviations can be seen as a natural continuation after a central limit theorem or even a law of iterated logarithm.  Roughly speaking, they may be used to estimate the exponential decline of the probability of tail events related to the distance between the estimator and the parameter of interest. We refer to Dembo and Zeitouni \cite{DemboZeitouni98} regarding the mathematical formalization. Since the 1980s, numerous authors have worked on large and/or moderate deviations in a time series context under many and varied hypotheses. Without claiming to be exhaustive, one can mention the studies of Donsker and Varadhan \cite{DonskerVaradhan85} and Bercu \textit{et al.} \cite{BercuGamboaRouault97} on stationary Gaussian processes and quadratic forms, the paper of Worms \cite{Worms99} on Markov chains and regression models and the one of Bercu \cite{Bercu01} on first-order Gaussian stable, unstable and explosive processes. One can also mention the works of Mas and Menneteau \cite{MasMenneteau03} on Hilbertian processes, Djellout \textit{et al.} \cite{DjelloutGuillinWu06} on non-linear functionals of moving average processes, Wu and Zhao \cite{WuZhao08} on stationary non-linear processes, Miao and Shen \cite{MiaoShen09} on general autoregressive processes or, more recently, Bitseki Penda \textit{et al.} \cite{BitsekiDjelloutProia13} on first-order processes with correlated errors. All the references inside may complete this concise list.

\smallskip

In this paper, we investigate the moderate deviations of the estimate in stable but nearly unstable autoregressions. This can be seen as a full generalization of the recent work of Miao, Wang and Yang \cite{MiaoWangYang15}, focused on the univariate case. Our proofs essentially rely on truncations and deviations of $m_{n}$--dependent sequences where the rate $(m_{n})$ is unbounded. The main technical contributions are twofold. On the one hand, expressing the nearly instability directly through the sequence of spectral radii of the companion matrix seems, to the best of our knowledge, a new approach having many advantages. For example the authors of the recent paper \cite{BuchmannChan13} introduce a perturbation in the Jordan canonical form of the model (see Thm. 2.1) which is a powerful idea to deal with the subject of their study, but somehow unnecessarily complex for ours. On the other hand, from a purely technical point of view, unbounded truncations have already been used to get moderate deviations (see \textit{e.g.} \cite{MiaoYang08} and \cite{MiaoWangYang15}), but we will see that the vector case treated here and the specific features of the model cannot be adapted as easily to the existing tools. As a consequence, we need to redevelop a full G\"artner-Ellis reasoning to establish the deviations of our unbounded vector truncations. This quite general strategy might inspire future similar studies.

\smallskip

For a fixed $n \geq 1$, let the process be given for some $p \geq 1$ and $k \in \{ 1, \hdots, n \}$ by
\begin{equation*}
X_{n,\, k} = \sum_{i=1}^{p} \theta_{n,\, i}\, X_{n,\, k-i} + \veps_{k}
\end{equation*}
where $(\veps_{k})_{k}$ is a sequence of zero-mean i.i.d. random variables. In an equivalent way, we can consider the vector expression
\begin{equation}
\label{ModVAR}
\Phi_{n,\, k} = A_{n}\, \Phi_{n,\, k-1} + E_{k}
\end{equation}
where $E_{k} = (\veps_{k}, 0, \hdots, 0)\tr$ is a $p$--vectorial noise, $\Phi_{n,\, k} = (X_{n,\, k}, \hdots, X_{n,\, k-p+1})\tr$ and
\begin{equation}
\label{MatA}
A_{n} = \begin{pmatrix}
\theta_{n,\, 1} & \theta_{n,\, 2} & \hdots & \theta_{n,\, p} \\
& I_{p-1} & & 0
\end{pmatrix}
\end{equation}
is the $p \times p$ companion matrix of the autoregressive process. If $(E_{k})_{k}$ has a finite variance, it is well-known that $(\Phi_{k,\, n})_{k}$ is a second-order stationary process having the causal form
\begin{equation}
\label{CausalForm}
\Phi_{n,\, k} = \sum_{\ell=0}^{+\infty} A_{n}^{\ell}\, E_{k-\ell}
\end{equation}
when $\rho(A_{n}) < 1$, that is, when the largest modulus of its eigenvalues is less than 1 (see \textit{e.g.} Thm. 11.3.1 of \cite{BrockwellDavis91} and the fact that each eigenvalue of $A_{n}$ is the inverse of a zero of the autoregressive polynomial of the process). Since $(\veps_{k})_{k}$ is an i.i.d. sequence, the process is strictly stationary with mean zero and variance given by
\begin{equation}
\label{Var}
\Gamma_{n} = \sigma^2\, \sum_{\ell=0}^{+\infty} A_{n}^{\ell}\, K_{p}\, (A_{n}\tr)^{\ell}
\end{equation}
where, for convenience, we will denote in the whole study
\begin{equation}
\label{MatKU}
K_{p} = \begin{pmatrix}
1 & 0 \\
0 & 0_{p-1}
\end{pmatrix} \hsp \text{and} \hsp U_{p} = \begin{pmatrix}
1 \\
0
\end{pmatrix}
\end{equation}
the $p \times p$ matrix with 1 at the top left and 0 elsewhere, and its first column standing for the first vector of the canonical basis of $\dR^{p}$. As a consequence of the causal expression above, the initial vector $\Phi_{n,\, 0}$ is not arbitrary and has to share the distribution of the process. This also implies the relation
\begin{equation}
\label{RelGammaK}
\Gamma_{n} = A_{n}\, \Gamma_{n}\, A_{n}\tr + \sigma^2\, K_{p}.
\end{equation}
As will be largely developped throughout the study, $\Gamma_{n}$ is finite for all $n \geq 1$ but, as $n$ increases, $\nm \Gamma_{n} \nm \rightarrow +\infty$. The keystone matrix $\Gamma$ obtained after a correct standardization of $\Gamma_{n}$ is the renormalized asymptotic variance of the process. Before we start, we define a matrix that will also prove to be crucial to our results,
\begin{equation}
\label{MatB}
B_{n} = I_{p^2} - A_{n} \otimes A_{n}.
\end{equation}
We are now going to introduce and comment the hypotheses that will be needed, though not always simultaneously, in the whole paper. Section \ref{SecMainRes} is devoted to our main results : two statements related to the moderate deviations of the empirical covariance and the OLS estimator, a set of explicit examples and some additional comments and conclusions. Finally, in Section \ref{SecPro} divided into numerous subsections, we will prove all our results, step by step.

\begin{rem}
We denote by $\Vert \cdot \Vert$ the Euclidean vector norm and by $\nm \cdot \nm$ the spectral matrix norm. Other norms may be used, in which case an appropriated subscript is added. Moreover, we will always denote by $\langle \cdot,\cdot \rangle$ the usual inner product of the Euclidean space $\dR^{d}$ for any $d \geq 1$. We write $M^{\, \dagger}$ for the Moore-Penrose pseudo-inverse of any matrix $M$, whose definition and properties may be found in Sec. 0 of \cite{HiriartLemarechal12}.
\end{rem}

\subsection{Hypotheses}
\label{SecHyp}

First of all, we present the hypotheses that we retain.
\begin{enumerate}[label=(H$_\arabic*$)]
\item \label{HypGI} \textit{Gaussian integrability condition}. There exists $\alpha > 0$ such that
\begin{equation*}
\dE\big[ \de^{\alpha\, \veps_1^{\, 2}} \big] < +\infty
\end{equation*}
where $\veps_1$ represents the zero-mean i.i.d. sequence $(\veps_{k})_{k}$ of variance $\sigma^2 > 0$ and fourth-order moment $\tau^4 > 0$.
\item \label{HypCA} \textit{Convergence of the companion matrix}. There exists a $p \times p$ matrix $A$ such that
\begin{equation*}
\limn A_{n} = A
\end{equation*}
with distinct eigenvalues $0 < \vert \lambda_{p} \vert \leq \hdots \leq \vert \lambda_{1} \vert = \rho(A)$, and the top right element of $A$ is non-zero.
\item \label{HypSR} \textit{Spectral radius of the companion matrix}. For all $n \geq 1$, $\rho(A_{n}) < 1$. In addition,
\begin{equation*}
\limn \rho(A_{n}) = \rho(A) = 1.
\end{equation*}
\item \label{HypRC} \textit{Renormalization}. We have the convergences
\begin{equation*}
\limn \frac{B^{-1}_{n}}{\nm B^{-1}_{n} \nm_{*}} = H \hsp \text{and} \hsp \limn (1 - \rho(A_{n}))\, \nm B^{-1}_{n} \nm_{*} = h
\end{equation*}
for some matrix norm, where $H$ is a $p^2 \times p^2$ non-zero matrix and $h > 0$.
\item \label{HypMD} \textit{Moderate deviations}. The moderate deviations scale $(b_{n})$ satisfies
\begin{equation*}
\limn b_{n} = +\infty \hsp \text{and} \hsp \limn \frac{\sqrt{n}\, (1 - \rho(A_{n}))^{\frac{3}{2} + \eta}}{b_{n}} = +\infty
\end{equation*}
for a small $\eta > 0$.
\end{enumerate}

\subsection{Comments on the hypotheses}
\label{SecComm}

First, conceding in \ref{HypCA} that the limiting matrix has distinct eigenvalues is a matter of simplication of the reasonings. Indeed, $A_{n}$ turns out to be diagonalizable for a sufficiently large $n$, and, as a companion matrix, it is well-known that the change of basis is done \textit{via} a Vandermonde matrix having numerous nice properties (more details are given in Section \ref{SecLinAlg}, and a discussion on the case of multiple eigenvalues is provided in Section \ref{SecMultVP}). The top right element of $A_{n}$ is $\theta_{n,\, p}$. So, assuming in \ref{HypCA} that $\theta_{n,\, p} \nrightarrow 0$ ensures that the limit process is still of order $p$ and that 0 cannot be an eigenvalue of $A$, since $\det(A) = (-1)^{p+1}\, \theta_{p}$. Moreover, note that, in \ref{HypRC}, the invertibility of $B_{n}$ for all $n$ is guaranteed by \ref{HypSR}. Indeed, $\rho(A_{n} \otimes A_{n}) = \rho^2(A_{n}) < 1$ (see \textit{e.g.} Lem. 5.6.10 and Cor. 5.6.16 of \cite{HornJohnson92}). In addition, we obviously have, for all $\ell \geq 0$,
\begin{equation*}
\rho(A_{n}^{\ell}) = \rho^{\ell}(A_{n})\, \leq\, \nm A_{n}^{\ell} \nm
\end{equation*}
so that we get
\begin{equation}
\label{Ln}
\frac{1}{1 - \rho(A_{n})}\, \leq\, \sum_{\ell=0}^{+\infty} \nm A_{n}^{\ell} \nm = L_{n}
\end{equation}
giving a lower bound for $L_{n}$. Similarly,
\begin{equation}
\label{Mn}
\frac{1}{(1 - \rho(A_{n}))^2}\, \leq\, \sum_{\ell=0}^{+\infty} (\ell+1)\, \nm A_{n}^{\ell} \nm = M_{n}.
\end{equation}
However, an exact upper bound for these sums may be difficult to reach and may require stringent conditions on the elements of $A_{n}$. We refer the reader to Lemma \ref{LemEquivLn} where, under \ref{HypCA} and \ref{HypSR}, some asymptotic upper bounds are established. We also refer to Section \ref{SecEx} where the explicit calculations in terms of some examples shall help to understand the rates involved in the hypotheses. Now for a fixed $n \geq 1$, let
\begin{equation*}
\mu_{n} = \rho(A_{n}) + \frac{1-\rho(A_{n})}{2} = \frac{\rho(A_{n}) + 1}{2}.
\end{equation*}
Clearly, $\rho(A_{n}) < \mu_{n} < 1$. Hence, according to Prop. 2.3.15 of \cite{Duflo97}, for all $n \geq 0$, there exists a constant $c_{n} > 0$ such that, for all $\ell \geq 0$, $\nm A_{n}^{\ell} \nm\, \leq\, c_{n}\, \mu_{n}^{\ell}$ so that
\begin{equation*}
L_{n}\, \leq\, \frac{c_{n}}{1 - \mu_{n}} < +\infty \hsp \text{and} \hsp M_{n}\, \leq\, \frac{c_{n}}{(1 - \mu_{n})^2} < +\infty.
\end{equation*}
Letting $n$ tend to infinity, it follows from \ref{HypSR} and \ref{HypRC} that
\begin{equation}
\label{LimBL}
\limn \nm B^{-1}_{n} \nm = \limn L_{n} = \limn M_{n} = +\infty.
\end{equation}
Finally, it will be established in good time that there is a limiting matrix $\Gamma$ such that
\begin{equation}
\label{LimGamGen}
\limn \frac{\Gamma_{n}}{\nm B_{n}^{-1} \nm_{*}} = \Gamma
\end{equation}
where $\nm \cdot \nm_{*}$ is the matrix norm of \ref{HypRC}.

\begin{rem}
To facilitate the reading, we consider from now on that the matrix norm $\nm \cdot \nm_{*}$ is identified in \ref{HypRC}, and we will only note $\nm \cdot \nm$ in what follows.
\end{rem}

\section{Main results}
\label{SecMainRes}

This section contains two statements that constitute the main results of the paper. The first of them is quite long to establish and will need numerous technical lemmas, but the second one will essentially be deduced as a corollary of the first one. Subsequently, we provide some explicit examples for a better understanding and an easier interpretation of the hypotheses together with some graphics showing the evolution of the processes and the estimation of the autoregressive parameter. At the end of the section, we discuss the case of multiple eigenvalues. But, first, let us recall the definition of the large and moderate deviation principles (see Sec. 1.2 of \cite{DemboZeitouni98} for more details). In what follows, a speed is considered as a positive sequence increasing to infinity.

\begin{defi}
A sequence of random variables $(U_{n})_{n}$ on a topological space $(\cX, \cB)$ satisfies a large deviation principle (LDP) with speed $(a_{n})$ and rate $I$ if there is a lower semicontinuous mapping $I : \cX \rightarrow \bar{\dR}^{+}$ such that :
\begin{itemize}
\item for any closed set $F \in \cB$,
\begin{equation*}
\limsup_{n\, \rightarrow\, +\infty} ~ \frac{1}{a_{n}} \ln \dP(U_{n} \in F) ~ \leq ~ -\inf_{x\, \in\, F} I(x),
\end{equation*}
\item for any open set $G \in \cB$,
\begin{equation*}
-\inf_{x\, \in\, G} ~ I(x) ~ \leq ~ \liminf_{n\, \rightarrow\, +\infty} ~ \frac{1}{a_{n}} \ln \dP(U_{n} \in G).
\end{equation*}
\end{itemize}
In particular, if the infimum of $I$ coincides on the interior $H^{\circ}$ and the closure $\bar{H}$ of some $H \in \cB$, then
\begin{equation*}
\lim_{n\, \rightarrow\, +\infty} ~ \frac{1}{a_{n}} \ln \dP(U_{n} \in H) = -\inf_{x\, \in\, H} ~ I(x).
\end{equation*}
\end{defi}

\begin{defi}
A sequence of random variables $(V_{n})_{n}$ on a topological space $(\cX, \cB)$ satisfies a moderate deviation principle (MDP) with speed $(b_{n}^{\, 2})$ and rate $I$ if there is a speed $(v_{n})$ with $\frac{v_{n}}{b_{n}} \rightarrow +\infty$ such that $(\frac{v_{n}}{b_{n}}\, V_{n})_{n}$ satisfies a large deviation principle of speed $(b_{n}^{\, 2})$ and rate $I$. 
\end{defi}

\subsection{Moderate deviations}
\label{SecMDP}

We now consider an observable trajectory $X_{n,\, -p+1}, \hdots, X_{n,\, n}$ for some fixed $n \geq 1$, and use it to provide an estimation of the parameter. It is well-known that the ordinary least squares (OLS) estimator of $\theta_{n} = (\theta_{n,\, 1}, \hdots, \theta_{n,\, p})\tr$ is given by
\begin{equation}
\label{OLS}
\whn = S_{n-1}^{\, -1} \sum_{k=1}^{n} \Phi_{n,\, k-1}\, X_{n,\, k} \hsp \text{where} \hsp S_{n-1} = \sum_{k=1}^{n} \Phi_{n,\, k-1}\, \Phi_{n,\, k-1}\tr.
\end{equation}
The first result is dedicated to the empirical variance $\frac{S_{n}}{n}$.

\begin{thm}
\label{ThmMDPCov}
Under hypotheses \ref{HypGI}--\ref{HypMD}, the sequence
\begin{equation*}
\left( \frac{\sqrt{n}\, (1 - \rho(A_{n}))^{\frac{3}{2}}}{b_{n}}\, \vec\bigg( \frac{1}{n} \sum_{k=1}^{n} (\Phi_{n,\, k}\, \Phi_{n,\, k}\tr - \Gamma_{n}) \bigg) \right)_{\!n\, \geq\, 1}
\end{equation*}
satisfies an LDP with speed $(b_{n}^{\, 2})$ and a rate function $I_{\Gamma} : \dR^{p^2} \rightarrow \bar{\dR}^{+}$ defined as
\begin{equation*}
I_{\Gamma}(x) = \left\{
\begin{array}{ll}
\frac{1}{2\, h^3}\, \langle x, \Upsilon^{\, \dagger}\, x \rangle & \mbox{for } x \in \im(\Upsilon) \\
+\infty & \mbox{otherwise}
\end{array}
\right.
\end{equation*}
where $\Upsilon$ is explicitely given in \eqref{LimVarRenorm} and $h$ comes from \ref{HypRC}.
\end{thm}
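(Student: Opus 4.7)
The strategy is classical in spirit but technically delicate: truncate the moving-average representation \eqref{CausalForm} to make the process $m_n$--dependent with a well-chosen unbounded rate $m_n \to \infty$, show that the discarded tail is superexponentially negligible at speed $b_n^{\,2}$, and then establish the LDP for the truncated empirical covariance through a direct G\"artner-Ellis type computation of the limiting log-moment generating function.

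First I would replace $\Phi_{n,k}$ by its truncation $\Phi_{n,k}^{(m_n)} = \sum_{\ell=0}^{m_n} A_n^\ell\, E_{k-\ell}$ for a rate $m_n \to \infty$ chosen so that the tail $\sum_{\ell > m_n} \nm A_n^\ell \nm$ is small enough to guarantee that
\[
\frac{(1-\rho(A_n))^{3/2}}{b_n \sqrt{n}} \sum_{k=1}^n \bigl( \Phi_{n,k}\Phi_{n,k}\tr - \Phi_{n,k}^{(m_n)} (\Phi_{n,k}^{(m_n)})\tr \bigr)
\]
is superexponentially small of speed $b_n^{\,2}$, and similarly for the centering after replacing $\Gamma_n$ by its truncated analogue, while $m_n = o(n)$ so that well-separated blocks are independent. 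The exponential moment control on quadratic functionals of $(\veps_k)$ needed at this step comes from \ref{HypGI}, the size of the discarded tail is controlled through \ref{HypRC} together with the asymptotic bounds on $L_n$ and $M_n$, and the slack exponent $\eta > 0$ in \ref{HypMD} is precisely what permits such a choice of $m_n$.

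Next, for the truncated $m_n$--dependent process, I would split $\{1,\ldots,n\}$ into alternating big and small blocks, the big ones of length much larger than $m_n$ and the small ones of length $m_n$ to decouple the big ones, thereby reducing the analysis to a sum of asymptotically independent quadratic functionals of finitely many innovations. Using \ref{HypGI} to justify expanding the exponential of such a quadratic, and using \eqref{LimGamGen} together with \ref{HypRC} to identify the normalized variance of one block, I would show that the rescaled cumulant generating function
\[
\Lambda_n(\lambda) = \frac{1}{b_n^{\,2}} \ln \dE \exp\!\Bigl( \tfrac{b_n (1-\rho(A_n))^{3/2}}{\sqrt{n}} \bigl\langle \lambda,\ \vec\!\bigl(\textstyle\sum_{k=1}^n (\Phi_{n,k}^{(m_n)} (\Phi_{n,k}^{(m_n)})\tr - \Gamma_n^{(m_n)})\bigr) \bigr\rangle \Bigr)
\]
converges pointwise on $\dR^{p^2}$ to the quadratic form $\Lambda(\lambda) = \tfrac{h^3}{2}\, \langle \lambda, \Upsilon \lambda \rangle$, where $\Upsilon$ is as in \eqref{LimVarRenorm}; the $h^3$ factor tracks the three cumulative occurrences of $\nm B_n^{-1} \nm$ (one from each marginal variance, one from the integrated autocovariance of the quadratic process).

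The main obstacle lies in the last step: because $m_n$ is unbounded and the coefficients $A_n^\ell$ themselves depend on $n$, one cannot quote the classical G\"artner-Ellis theorem as a black box. I would therefore redo the argument by hand, splitting it into a (comparatively easy) upper bound via exponential Chebyshev applied to $\Lambda_n$, and a (more delicate) lower bound obtained by an exponential tilt along $\lambda$ combined with a CLT-type second-moment computation on the tilted law to extract the Legendre-Fenchel dual of $\Lambda$. Uniform control of boundary terms between successive blocks is the nontrivial point because $m_n$ grows with $n$, and this is exactly where \ref{HypMD}, through the margin $\eta$, enters essentially. Once these two bounds are established, the rate function is identified as $I_\Gamma(x) = \sup_\lambda \{\langle \lambda, x \rangle - \Lambda(\lambda)\}$; the quadratic form $\Lambda$ may well be degenerate, since the symmetry of $\Phi_{n,k}\Phi_{n,k}\tr$ forces linear relations among the coordinates of the vectorisation, and the standard Legendre-Fenchel formula for a degenerate nonnegative quadratic yields precisely $\frac{1}{2h^3}\langle x, \Upsilon^{\,\dagger} x\rangle$ on $\im(\Upsilon)$ and $+\infty$ elsewhere.
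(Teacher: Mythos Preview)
Your overall scheme (truncation to an $m_n$--dependent process, blocking, convergence of the rescaled log-MGF to a quadratic, Legendre duality) is the right shape, but you are missing the paper's key structural step: the Sylvester decomposition. Before truncating anything, the paper writes
\[
\vec\Bigl(\tfrac{1}{n}\textstyle\sum_k (\Phi_{n,k}\Phi_{n,k}\tr - \Gamma_n)\Bigr) \;=\; B_n^{-1}\,\vec(\Delta_n) \;-\; \tfrac{1}{n}\,B_n^{-1}\,\vec(T_n),
\]
where $\Delta_n = \frac{1}{n}\sum_k \Delta_{n,k}$ and each $\Delta_{n,k}$ is built from $A_n\Phi_{n,k-1}E_k\tr$, its transpose, and $E_kE_k\tr - \sigma^2 K_p$. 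The point is that $(\Delta_{n,k})_k$ is a \emph{martingale difference} sequence, so that after truncating $\Phi_{n,k-1}$ to $\Psi_{n,k-1}$ one obtains summands $\zeta_{n,k}$ that are pairwise uncorrelated (Lemma~\ref{LemVarTrunc}). This has two consequences you do not have. First, the truncation remainder $\sum_k A_n(\Phi_{n,k-1}-\Psi_{n,k-1})E_k\tr$ is itself a martingale sum, and its superexponential negligibility follows from a martingale exponential inequality (Lemma~\ref{LemExpResTrunc}). In your route, the remainder $\sum_k\bigl(\Phi_{n,k}\Phi_{n,k}\tr - \Phi_{n,k}^{(m_n)}(\Phi_{n,k}^{(m_n)})\tr\bigr)$ is a doubly-indexed sum of correlated products $A_n^\ell E_{k-\ell}E_{k-m}\tr(A_n^m)\tr$ over $\max(\ell,m)>m_n$, and you give no argument for its exponential control; this is the genuine gap. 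Second, the block variance in the paper reduces to $m_n u_n\,\dV(\vec(\zeta_{n,1}))$ with no autocovariance summation, so the limit $\Upsilon$ is read off directly (equation~\eqref{LimVarRenorm}); your ``integrated autocovariance'' would have to be computed and matched to the same $\Upsilon$, which you do not do.

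A smaller point: you claim G\"artner--Ellis cannot be used as a black box here. In fact the paper does invoke it directly, after proving via a Taylor expansion (with the exponential moment \ref{HypGI} controlling the cubic remainder) that the rescaled log-MGF converges pointwise to the quadratic $\lambda\mapsto\frac12\langle\lambda,\Upsilon\lambda\rangle$. Since this limit is everywhere differentiable, the standard G\"artner--Ellis theorem delivers both bounds; no separate tilting argument for the lower bound is needed. What is genuinely redeveloped by hand is the \emph{convergence of the log-MGF} in the presence of the $n$-dependent coefficients and the unbounded $m_n$, not the passage from that convergence to the LDP.
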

\begin{proof}
See Section \ref{SecProMDPCov}.
\end{proof}

\begin{rem}
Through vectorization, this MDP is established on $\dR^{p^2}$ in order to avoid any confusion in the notations, but we might work in $\dR^{p \times p}$ as well. The associated rate function would only require a slight modification of the proof.
\end{rem}

\begin{rem}
To be punctilious, we may add a small $\epsilon > 0$ to the diagonal of $S_{n-1}$ to ensure that it is non-sigular for all $n \geq 1$ without disturbing the asymptotic behavior.
\end{rem}

When the variance $\Gamma$ given in \eqref{LimGamGen} is invertible, we establish the MDP for the OLS in the theorem that follows. However, when it is not the case, there are some technical complications and, to reach an intermediate result, we need to introduce a penalized version of the OLS. For a small $\pi \geq 0$, define
\begin{equation}
\label{OLSPen}
\whn^{\, \pi} = (S_{n-1}^{\, \pi})^{-1} \sum_{k=1}^{n} \Phi_{n,\, k-1}\, X_{n,\, k} \hsp \text{where} \hsp S_{n-1}^{\, \pi} = S_{n-1} + \pi\, n\, \nm B_{n}^{-1} \nm\, I_{p}
\end{equation}
with possibly $\pi = 0$ if $\Gamma$ is invertible, in which case it is clearly the standard OLS given above, but necessarily $\pi > 0$ otherwise. Consider also the penalized version of the variance and the corrected parameter
\begin{equation}
\label{VarPen}
\Gamma_{\pi} = \Gamma + \pi\, I_{p} \hsp \text{and} \hsp \theta_{n}^{\, \pi} = (S_{n-1}^{\, \pi})^{-1} S_{n-1}\, \theta_{n}.
\end{equation}
By construction, $\Gamma$ is, at worst, non-negative definite and for $\pi > 0$, $\Gamma_{\pi}$ turns out to be invertible. The same goes for $S_{n-1}^{\, \pi}$.

\begin{cor}
\label{CorMDPOLS}
Under hypotheses \ref{HypGI}--\ref{HypMD}, for all $\pi > 0$, the sequence
\begin{equation*}
\left( \frac{\sqrt{n}}{b_{n}\, (1 - \rho(A_{n}))^{\frac{1}{2}}}\, \big( \whn^{\, \pi} - \theta_{n}^{\, \pi} \big) \right)_{\!n\, \geq\, 1}
\end{equation*}
satisfies an LDP with speed $(b_{n}^{\, 2})$ and a rate function $I_{\theta}^{\, \pi} : \dR^{p} \rightarrow \bar{\dR}^{+}$ defined as
\begin{equation*}
I_{\theta}^{\, \pi}(x) = \left\{
\begin{array}{ll}
\frac{h}{2\, \sigma^2}\, \langle x, \Gamma_{\pi}\, \Gamma^{\, \dagger}\, \Gamma_{\pi}\, x \rangle & \mbox{for } x \in \im(\Gamma_{\pi}^{-1}\, \Gamma) \\
+\infty & \mbox{otherwise}
\end{array}
\right.
\end{equation*}
where the variance $\Gamma$ is given in \eqref{LimGamGen}, $\Gamma_{\pi}$ is the penalized variance given in \eqref{VarPen} and $h$ comes from \ref{HypRC}, respectively. If in addition $\Gamma$ is invertible, then the sequence
\begin{equation*}
\left( \frac{\sqrt{n}}{b_{n}\, (1 - \rho(A_{n}))^{\frac{1}{2}}}\, \big( \whn - \theta_{n} \big) \right)_{\!n\, \geq\, 1}
\end{equation*}
satisfies an LDP with speed $(b_{n}^{\, 2})$ and a rate function $I_{\theta} : \dR^{p} \rightarrow \dR^{+}$ defined as
\begin{equation*}
I_{\theta}(x) = \frac{h}{2\, \sigma^2}\, \langle x, \Gamma\, x \rangle.
\end{equation*}
\end{cor}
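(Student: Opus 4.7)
The plan is to reduce the corollary to Theorem~\ref{ThmMDPCov} together with an auxiliary MDP for the noise martingale. Plugging the model $X_{n,\, k} = \langle \theta_{n}, \Phi_{n,\, k-1} \rangle + \veps_{k}$ into \eqref{OLSPen}--\eqref{VarPen} gives
\[
\whn^{\, \pi} - \theta_{n}^{\, \pi} = (S_{n-1}^{\, \pi})^{-1}\, M_{n}, \hsp M_{n} = \sum_{k=1}^{n} \Phi_{n,\, k-1}\, \veps_{k}.
\]
Since \ref{HypRC} implies $(1 - \rho(A_{n}))^{1/2}\, \nm B_{n}^{-1} \nm \sim h^{1/2}\, \nm B_{n}^{-1} \nm^{1/2}$, the target sequence is asymptotically equivalent to $h^{-1/2} \big(S_{n-1}^{\, \pi}/(n\, \nm B_{n}^{-1} \nm)\big)^{-1}\, M_{n}/\big(b_{n}\sqrt{n\, \nm B_{n}^{-1} \nm}\big)$, and the proof splits into handling the covariance factor and the martingale factor separately.

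For the covariance factor, applying Theorem~\ref{ThmMDPCov} to the closed set $F_{\delta} = \{ y : \Vert y - \vec(\Gamma) \Vert \geq \delta \}$ and invoking \eqref{LimGamGen} gives $\dP\big( \Vert S_{n-1}/(n\, \nm B_{n}^{-1} \nm) - \Gamma \Vert \geq \delta \big) \leq \de^{-c_{\delta}\, b_{n}^{2}}$ for large $n$, hence $S_{n-1}^{\, \pi}/(n\, \nm B_{n}^{-1} \nm) \to \Gamma_{\pi}$ super-exponentially at speed $b_{n}^{\, 2}$. As $\Gamma_{\pi}$ is invertible (because either $\pi > 0$ or $\Gamma$ is invertible in the second part of the statement), the same holds for its inverse, which allows me to substitute $(S_{n-1}^{\, \pi})^{-1}$ by $(n\, \nm B_{n}^{-1} \nm)^{-1}\, \Gamma_{\pi}^{-1}$ up to exponential equivalence in the sense of Dembo--Zeitouni.

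The core step, and the main obstacle, is to prove that $\widetilde M_{n} = M_{n}/\big(b_{n}\sqrt{n\, \nm B_{n}^{-1} \nm}\big)$ satisfies an MDP with speed $(b_{n}^{\, 2})$ and rate $J(x) = \frac{1}{2\sigma^{2}}\langle x, \Gamma^{\, \dagger}\, x\rangle$ on $\im(\Gamma)$, $+\infty$ elsewhere. I would mirror the strategy of Theorem~\ref{ThmMDPCov}: truncate $\veps_{k}$ at a level compatible with \ref{HypGI} and \ref{HypMD}, approximate $\Phi_{n,\, k-1}$ by an $m_{n}$--dependent version via the causal expansion \eqref{CausalForm}, and run a Gärtner--Ellis argument. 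The conditional cumulant computation $\frac{1}{b_{n}^{2}}\log \dE[\de^{b_{n}^{2}\, \langle \lambda, \widetilde M_{n}\rangle}] \approx \frac{\sigma^{2}}{2}\, \langle \lambda, \big(S_{n-1}/(n\, \nm B_{n}^{-1} \nm)\big)\, \lambda\rangle$ converges, thanks to the super-exponential convergence above, to $\frac{\sigma^{2}}{2}\langle \lambda, \Gamma\, \lambda\rangle$, whose Fenchel--Legendre transform is exactly $J$. The difficulty is twofold: the increments $\Phi_{n,\, k-1}\, \veps_{k}$ scale like $\nm B_{n}^{-1} \nm^{1/2}$, which diverges, so the truncation threshold, dependence rate $m_{n}$ and speed $b_{n}$ must be balanced very carefully along the lines already developed for Theorem~\ref{ThmMDPCov}; and since $M_{n}$ is linear (not quadratic) in the trajectory, it cannot be deduced from Theorem~\ref{ThmMDPCov} by a simple contraction, forcing the Gärtner--Ellis argument to be re-run from scratch.

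Finally, combining the three ingredients via exponential equivalence and the contraction principle, with the continuous linear map $x \mapsto h^{-1/2}\, \Gamma_{\pi}^{-1}\, x$, delivers the announced LDP. Writing $y = h^{-1/2}\, \Gamma_{\pi}^{-1}\, x$, the condition $x \in \im(\Gamma)$ translates to $y \in \im(\Gamma_{\pi}^{-1}\, \Gamma)$, and
\[
J\big(h^{1/2}\, \Gamma_{\pi}\, y\big) = \frac{h}{2\sigma^{2}}\, \langle y, \Gamma_{\pi}\, \Gamma^{\, \dagger}\, \Gamma_{\pi}\, y \rangle = I_{\theta}^{\, \pi}(y).
\]
When $\Gamma$ is invertible, one may take $\pi = 0$, so that $\Gamma_{\pi} = \Gamma$, $\im(\Gamma_{\pi}^{-1}\, \Gamma) = \dR^{p}$ and $\Gamma\, \Gamma^{-1}\, \Gamma = \Gamma$, recovering the simpler rate $I_{\theta}(y) = \frac{h}{2\sigma^{2}}\, \langle y, \Gamma\, y \rangle$.
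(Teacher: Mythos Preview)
Your outline matches the paper's proof essentially step for step: the same decomposition $\whn^{\pi} - \theta_n^{\pi} = (S_{n-1}^{\pi})^{-1} M_n$, the same use of Theorem~\ref{ThmMDPCov} to control the covariance factor, the same re-running of the truncation/G\"artner--Ellis machinery of Section~\ref{SecProMDPCov} for the martingale term (the paper simply remarks that $\Psi_{n,k-1}\,\veps_k$ is ``precisely the first column of the first term of \eqref{ZetaTrunc}'', so the calculations carry over verbatim with limiting variance $\sigma^2\Gamma$), and the same contraction by $x \mapsto h^{-1/2}\,\Gamma_{\pi}^{-1} x$.

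One technical point to fix: the bound $\dP(\Vert S_{n-1}/(n\,\nm B_n^{-1}\nm) - \Gamma\Vert \geq \delta) \leq \de^{-c_\delta b_n^{2}}$ is neither what Theorem~\ref{ThmMDPCov} delivers nor what you need. A fixed exponential rate $c_\delta$ is \emph{not} ``super-exponential at speed $b_n^{2}$'' and is insufficient for exponential equivalence in the sense of Dembo--Zeitouni. The correct statement, which the paper derives, is $b_n^{-2}\ln\dP(\cdots) \to -\infty$: the event $\{\Vert S_{n-1}/(n\,\nm B_n^{-1}\nm) - \Gamma_n/\nm B_n^{-1}\nm\Vert \geq r\}$ corresponds, after the MDP rescaling, to the LDP sequence lying outside a ball of radius $r_n = r\sqrt{n}\,/(b_n\,\nm B_n^{-1}\nm^{1/2}) \to +\infty$, and goodness of $I_\Gamma$ then gives $-\infty$. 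A minor aside: no truncation of $\veps_k$ is performed in the paper; the Gaussian integrability \ref{HypGI} is used directly, and only the causal expansion is truncated via $\Psi_{n,k}$.
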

\begin{proof}
See Section \ref{SecProMDPOLS}.
\end{proof}

To sum up, this result shows that, when $\Gamma$ is invertible, the OLS satisfies an MDP, and even when $\Gamma$ is singular, one may reach a compromise by getting an MDP for a penalized estimation. In the same vein, notice also that, in the invertible case,
\begin{equation*}
\lim_{\pi\, \rightarrow\, 0^{+}} I_{\theta}^{\, \pi}(x) = I_{\theta}(x).
\end{equation*}

\begin{rem}
In the stable case where $\rho(A_{n}) = \rho(A) < 1$, we simply have $(1 - \rho(A_{n}))\, \nm B^{-1}_{n} \nm = h$ and $\Gamma_{n}\, \nm B_{n}^{-1} \nm^{-1} = \Gamma$ for all $n \geq 1$. By contraction, the MDP of Corollary \ref{CorMDPOLS} coincides with the one of Thm. 3 of \cite{Worms99} when $\Gamma$ is invertible.
\end{rem}

\subsection{Some explicit examples}
\label{SecEx}

Before giving some examples, we can already note that \ref{HypMD} implies $\sqrt{n}\, (1 - \rho(A_{n})) \rightarrow +\infty$. Thus, necessarily, the convergence $1-\rho(A_{n}) \rightarrow 0$ cannot occur with an exponential rate, this is the reason why we focus on polynomial rates of the form $1-\rho(A_{n}) = c\, n^{-\alpha}$ for some $c > 0$ in this section. Accordingly, in all the examples below, \ref{HypMD} is only possible when $0 < \alpha < \frac{1}{3 + 2 \eta} < \frac{1}{3}$. Thus, one cannot expect a sequence of coefficients moving too fast toward instability. The domain of validify of the speed of the MDP will be
\begin{equation*}
1\, \ll\, b_{n}\, \ll\, n^{\, \frac{1 - (3 + 2 \eta) \alpha}{2}}\, \ll\, \sqrt{n}.
\end{equation*}

\subsubsection{Univariate case with one nearly unit root}
\label{SecEx1}

Suppose that $p=1$. Then, \ref{HypCA} and \ref{HypSR} imply that $\vert \theta_{n} \vert < 1$ and $\theta_{n} \rightarrow \pm 1$. We also have $B_{n} = 1 - \theta_{n}^{\, 2}$ and \ref{HypRC} can be expressed like
\begin{equation*}
\limn \frac{B^{-1}_{n}}{\vert B^{-1}_{n} \vert} = 1 \hsp \text{and} \hsp \limn (1 - \vert \theta_{n} \vert)\, \vert B^{-1}_{n} \vert = \frac{1}{2}.
\end{equation*}
A straightforward calculation shows that
\begin{equation*}
\Gamma_{n} = \frac{\sigma^2}{1 - \theta_{n}^{\, 2}} \hsp \text{and} \hsp \Gamma = \sigma^2 > 0
\end{equation*}
so that we can choose $\pi=0$. The standard cases, illustrated on Figure \ref{FigEx1}, are $\theta_{n} = 1 - c_1\, n^{-\alpha}$ for the positive unit root and $\theta_{n} = -1 + c_2\, n^{-\alpha}$ for the negative unit root, with $c_1, c_2 > 0$ and $\alpha > 0$. The rate function associated with Corollary \ref{CorMDPOLS} is $I_{\theta}(x) = \frac{x^2}{4}$, which corresponds to Prop. 2.1 of \cite{MiaoWangYang15}. Indeed, their rate $x \mapsto \frac{x^2}{2}$ is associated to an LDP with the renormalization $(1 - \theta_{n}^{\, 2})^{\frac{1}{2}}$ whereas our normalization is $(1 - \vert \theta_{n} \vert)^{\frac{1}{2}}$. By contraction, the asymptotic factor $\sqrt{2}$ explains the difference.

\begin{figure}[h]
\includegraphics[scale=0.6]{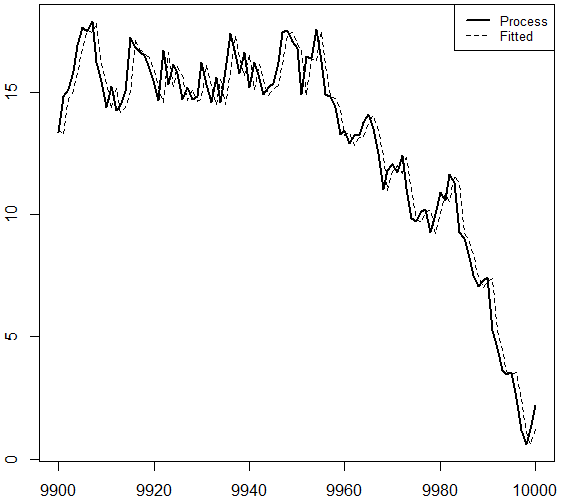} \hsp \includegraphics[scale=0.6]{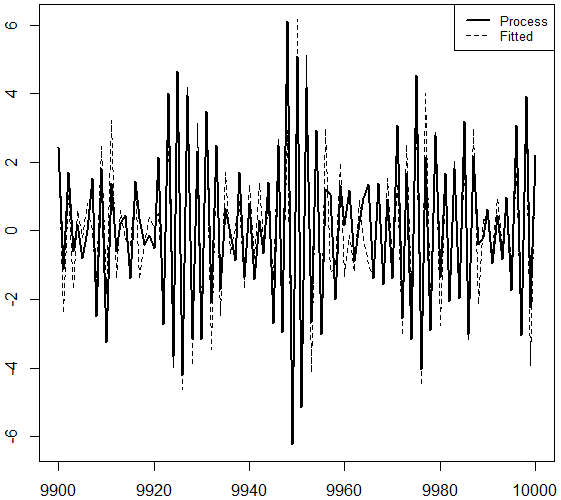}
\caption{Simulation of the process (solid line) and fitted values (dotted line) for $n=10^4$, $\pi = 0$ and $\cN(0,1)$ innovations. The setting is $c_1 = 0.1$ and $\alpha = 0.32$ on the left, $c_2 = 0.1$ and $\alpha = 0.32$ on the right.}
\label{FigEx1}
\end{figure}

\subsubsection{Bivariate case with one nearly unit root}
\label{SecEx2}

Suppose now that $p=2$ and $\textnormal{sp}(A) = \{ \pm 1, \lambda \}$ with $\vert \lambda \vert < 1$. This situation occurs, for example, when
\begin{equation*}
A_{n} = \begin{pmatrix}
\lambda + 1 - c\, n^{-\alpha} & -\lambda\, (1 - c\, n^{-\alpha}) \\
1 & 0
\end{pmatrix}
\end{equation*}
whose eigenvalues are $1 - c\, n^{-\alpha}$ and $\lambda$. This is illustrated on Figure \ref{FigEx2}. For $c > 0$ and $\alpha > 0$, \ref{HypCA} and \ref{HypSR} are satisfied. The direct calculation gives
\begin{equation*}
B_{n}^{-1} = \frac{1}{2\, c\, (\lambda - 1)^2}\, \begin{pmatrix}
1 & -\lambda & -\lambda & \lambda^2 \\
1 & -\lambda & -\lambda & \lambda^2 \\
1 & -\lambda & -\lambda & \lambda^2 \\
1 & -\lambda & -\lambda & \lambda^2
\end{pmatrix} \big( n^{\alpha}\ + O(1) \big)
\end{equation*}
whence we obtain
\begin{equation*}
\limn \frac{B^{-1}_{n}}{\nm B^{-1}_{n} \nm_{1}} = \frac{1}{4} \begin{pmatrix}
1 & -\lambda & -\lambda & \lambda^2 \\
1 & -\lambda & -\lambda & \lambda^2 \\
1 & -\lambda & -\lambda & \lambda^2 \\
1 & -\lambda & -\lambda & \lambda^2
\end{pmatrix} \hsp \text{and} \hsp \limn n^{-\alpha}\, \nm B^{-1}_{n} \nm_{1} = \frac{2}{c\, (\lambda - 1)^2}
\end{equation*}
so \ref{HypRC} is satisfied with the 1--norm. The choice $\pi=0$ is impossible, and we finally find
\begin{equation*}
\Gamma_{\pi} = \frac{\sigma^2}{4} \begin{pmatrix}
1 + \frac{4}{\sigma^2}\, \pi & 1 \\
1 & 1 + \frac{4}{\sigma^2}\, \pi
\end{pmatrix}.
\end{equation*}

\begin{figure}[h]
\includegraphics[scale=0.6]{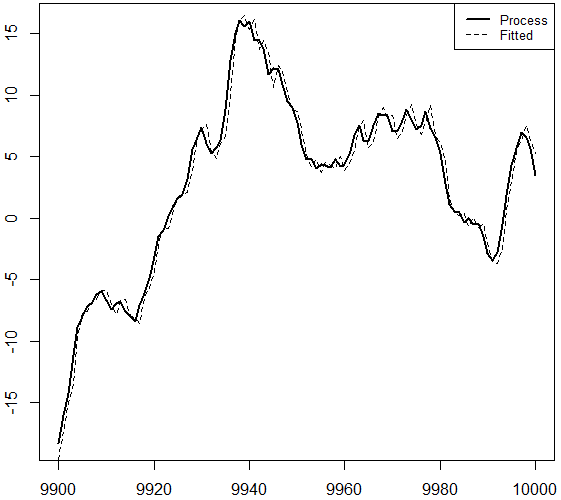} \hsp \includegraphics[scale=0.6]{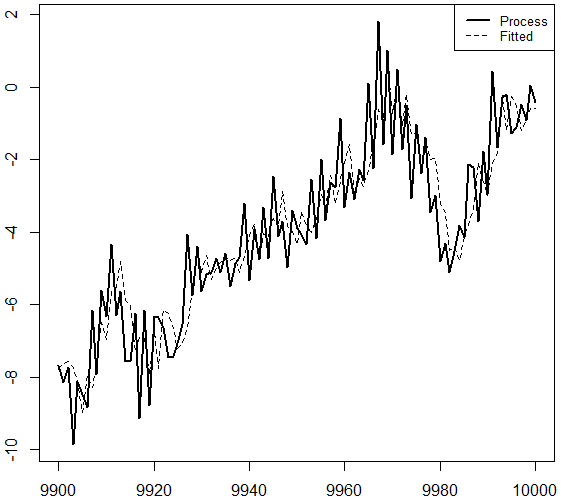}
\caption{Simulation of the process (solid line) and fitted values (dotted line) for $n=10^4$, $\pi = 10^{-5}$ and $\cN(0,1)$ innovations. The setting is $\lambda = 0.5$, $c = 0.1$ and $\alpha = 0.32$ on the left, $\lambda = -0.67$, $c = 0.2$ and $\alpha = 0.25$ on the right.}
\label{FigEx2}
\end{figure}

\subsubsection{Bivariate case with two nearly unit roots}
\label{SecEx3}

Following the same lines, suppose that $p=2$ and $\textnormal{sp}(A) = \{ -1,1 \}$. This situation occurs, for example, when
\begin{equation*}
A_{n} = \begin{pmatrix}
(c_2 - c_1) n^{-\alpha} & (1-c_1\, n^{-\alpha})(1-c_2\, n^{-\alpha}) \\
1 & 0
\end{pmatrix}
\end{equation*}
whose eigenvalues are $1 - c_1\, n^{-\alpha}$ and $-1 + c_2\, n^{-\alpha}$. This is illustrated on Figure \ref{FigEx3}. For $c_1, c_2 > 0$ and $\alpha > 0$, \ref{HypCA} and \ref{HypSR} are satisfied. The direct calculation gives
\begin{equation*}
B_{n}^{-1} = \frac{1}{8\, c_1\, c_2}\, \begin{pmatrix}
c_1+c_2 & c_2-c_1 & c_2-c_1 & c_1+c_2 \\
c_2-c_1 & c_1+c_2 & c_1+c_2 & c_2-c_1 \\
c_2-c_1 & c_1+c_2 & c_1+c_2 & c_2-c_1 \\
c_1+c_2 & c_2-c_1 & c_2-c_1 & c_1+c_2
\end{pmatrix} \big( n^{\alpha}\ + O(1) \big)
\end{equation*}
whence we obtain
\begin{equation*}
\limn \frac{B^{-1}_{n}}{\nm B^{-1}_{n} \nm_{1}} = \frac{1}{2\, (c_1+c_2) + 2\, \vert c_2 - c_1 \vert} \begin{pmatrix}
c_1+c_2 & c_2-c_1 & c_2-c_1 & c_1+c_2 \\
c_2-c_1 & c_1+c_2 & c_1+c_2 & c_2-c_1 \\
c_2-c_1 & c_1+c_2 & c_1+c_2 & c_2-c_1 \\
c_1+c_2 & c_2-c_1 & c_2-c_1 & c_1+c_2
\end{pmatrix}.
\end{equation*}
Moreover,
\begin{equation*}
\limn n^{-\alpha}\, \nm B^{-1}_{n} \nm_{1} = \frac{(c_1+c_2) + \vert c_2 - c_1 \vert}{4\, c_1\, c_2}
\end{equation*}
so \ref{HypRC} is satisfied with the 1--norm. The choice $\pi=0$ is possible and we finally find
\begin{equation*}
\Gamma = \frac{\sigma^2}{2\, (c_1+c_2) + 2\, \vert c_2 - c_1 \vert} \begin{pmatrix}
c_1+c_2 & c_2-c_1 \\
c_2-c_1 & c_1+c_2
\end{pmatrix}.
\end{equation*}

\begin{figure}[h]
\includegraphics[scale=0.6]{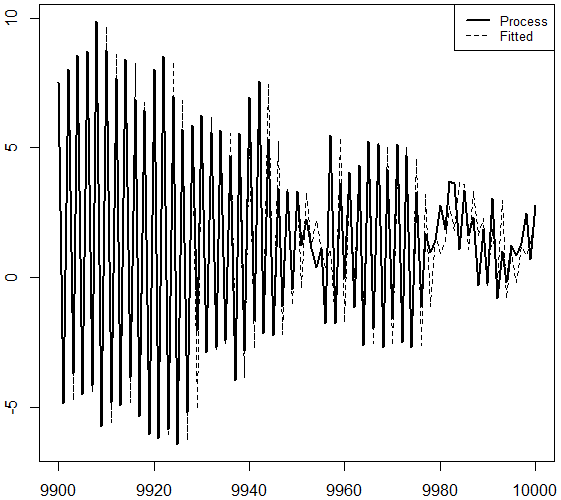} \hsp \includegraphics[scale=0.6]{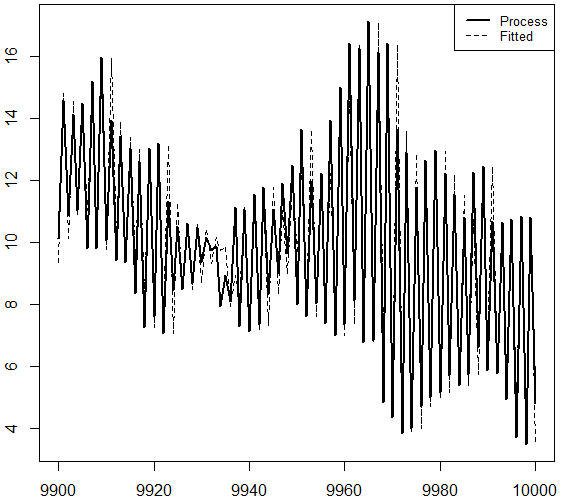}
\caption{Simulation of the process (solid line) and fitted values (dotted line) for $n=10^4$, $\pi = 0$ and $\cN(0,1)$ innovations. The setting is $c_1 = 0.1$, $c_2 = 0.2$ and $\alpha = 0.32$ on the left, $c_1 = 0.01$, $c_2 = 0.01$ and $\alpha = 0.25$ on the right.}
\label{FigEx3}
\end{figure}

\subsection{Discussion on multiple eigenvalues and conclusion}
\label{SecMultVP}

As we will see in the proof of Lemma \ref{LemEquivLn}, the distinct eigenvalues assumption \ref{HypCA} is sufficient to reach our results. However, a less stringent formulation of \ref{HypCA} could be :
\begin{enumerate}[label=(H$^{\, \prime}_\arabic*$)]
\setcounter{enumi}{1}
\item \label{HypCAb} \textit{Convergence of the companion matrix}. There exists a $p \times p$ matrix $A$ such that
\begin{equation*}
\limn A_{n} = A
\end{equation*}
and the top right element of $A$ is non-zero. In addition, there exists a rank $n_0$ such that, for all $n > n_0$, $A_{n}$ is diagonalizable and the change of basis matrix $P_{n}$ satisfies $\nm P_{n} \nm \leq \cst$ and $\nm P_{n}^{\, -1} \nm \leq \cst$.
\end{enumerate}
In general, multiple eigenvalues may not falsify our reasonings, except when the multiplicity concerns the eigenvalues whose modulus tends to 1. Indeed, the coefficients of $\nm A_{n}^{\ell} \nm$ may grow faster in that case. Consider the simple bivariate example where
\begin{equation*}
A_{n}^{\ell} ~ = ~ \begin{pmatrix}
a_{11,\, \ell} & a_{12,\, \ell} \\
a_{21,\, \ell} & a_{22,\, \ell}
\end{pmatrix} ~ = ~ \begin{pmatrix}
\theta_{n,\, 1}\, a_{11,\, \ell-1} + \theta_{n,\, 2}\, a_{11,\, \ell-2} & \theta_{n,\, 2}\, a_{11,\, \ell-1} \\
\theta_{n,\, 1}\, a_{21,\, \ell-1} + \theta_{n,\, 2}\, a_{21,\, \ell-2} & \theta_{n,\, 2}\, a_{21,\, \ell-1}
\end{pmatrix}.
\end{equation*}
Then, it is not hard to solve this linear difference equation whose characteristic roots are the eigenvalues of $A_{n}$. In case of multiplicity, the top left term takes the form of
\begin{equation*}
a_{11,\, \ell} = (c_{n} + d_{n}\, \ell)\, \rho^{\ell}(A_{n})
\end{equation*}
and even if $\vert c_{n} \vert \leq \cst$ and $\vert d_{n} \vert \leq \cst$ for $n$ large enough, it follows that
\begin{equation*}
\sum_{\ell=0}^{+\infty} \nm A_{n}^{\ell} \nm \sim \frac{\cst}{(1 - \rho(A_{n}))^2}.
\end{equation*}
That invalidates all our reasonings and, in that case, new approaches are needed to potentially reach the moderate deviations. From our viewpoint, this is the main weakness of the set of hypotheses. As it is already observed in \cite{ChanWei88}, multiple unit roots located at 1 influence the rate of convergence of the OLS. We conjecture that the same phenomenon occurs here and that a larger power should come with $1-\rho(A_{n})$ in the renormalization.

\smallskip

To sum up, this study is a wide generalization of \cite{MiaoWangYang15} and, although not complete in virtue of the latter remark, it covers most of the MDP issues for the estimation in the stable but nearly unstable case. Large deviations would undoubtedly be a very useful and challenging study to carry out, naturally extending this one. However, to the best of our knowledge, it is not even entirely treated in the stable time-invariant case $\rho(A_{n}) = \rho(A) < 1$, clearly revealing the complexity of the problem. A complicated but stimulating trail for future studies could rely on the exponential, and not only polynomial, neighborhood of the unit root. Along the same lines and even if it is of less practical interest, we might as well focus on the explosive side of the unit roots, where new theoretical developments are necessary.

\section{Technical proofs}
\label{SecPro}

In all the proofs, $\cst$ denotes a generic positive constant that is not necessarily identical from one line to another. We will frequently use the fact that $\Vert \vec(\cdot) \Vert = \nm \cdot \nm_{F} \leq \cst\, \nm \cdot \nm$. For asymptotic equivalences, $f_{n} \asymp g_{n}$ means that both $f_{n} = O(g_{n})$ and $g_{n} = O(f_{n})$ whereas $f_{n} \sim g_{n}$ stands for $\frac{f_{n}}{g_{n}} \rightarrow 1$.

\subsection{Some linear algebra tools}
\label{SecLinAlg}

Thereafter, we denote by $\lambda_{1}, \hdots, \lambda_{p}$ the (distinct) eigenvalues of $A$ and $\lambda_{n,\, 1}, \hdots, \lambda_{n,\, p}$ those of $A_{n}$, in descending order of modulus. We start by establishing two lemmas that will prove to be very useful in what follows.

\begin{lem}
\label{LemEquivLn}
Under hypotheses \ref{HypCA} and \ref{HypSR}, as $n$ tends to infinity,
\begin{equation*}
\sum_{\ell=0}^{+\infty} \nm A_{n}^{\ell} \nm\, \asymp\, \frac{1}{1 - \rho(A_{n})} \hsp \text{and} \hsp \sum_{\ell=0}^{+\infty} (\ell+1)\, \nm A_{n}^{\ell} \nm\, \asymp\, \frac{1}{(1 - \rho(A_{n}))^2}.
\end{equation*}
\end{lem}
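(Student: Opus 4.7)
The lower bounds
$$\sum_{\ell=0}^{+\infty} \nm A_{n}^{\ell} \nm\, \geq\, \frac{1}{1-\rho(A_{n})} \hsp \text{and} \hsp \sum_{\ell=0}^{+\infty} (\ell+1)\nm A_{n}^{\ell} \nm\, \geq\, \frac{1}{(1-\rho(A_{n}))^{2}}$$
are already stated in \eqref{Ln} and \eqref{Mn}, so the whole content of the lemma consists in proving the matching upper bounds. My plan is to exploit the fact that $A_{n}$ is a companion matrix whose limit has simple spectrum in order to diagonalize $A_{n}$ through a Vandermonde change of basis whose condition number stays bounded in~$n$.

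First, since $A$ has $p$ pairwise distinct eigenvalues by \ref{HypCA}, the discriminant of the characteristic polynomial is non-zero at $A$; by continuity of the coefficients of the characteristic polynomial in the entries of the matrix, the discriminant of $A_{n}$ is bounded away from $0$ for $n$ large, so $A_{n}$ has $p$ distinct eigenvalues $\lambda_{n,1},\ldots,\lambda_{n,p}$ for $n$ large enough, and by continuity of the roots of a polynomial we have $\lambda_{n,i}\rightarrow\lambda_{i}$. Because $A_{n}$ is a companion matrix, the eigenvector associated with $\lambda_{n,i}$ is, up to scaling, $(1,\lambda_{n,i}^{-1},\ldots,\lambda_{n,i}^{-(p-1)})\tr$ (or equivalently one can take $(\lambda_{n,i}^{p-1},\ldots,\lambda_{n,i},1)\tr$), so the diagonalization $A_{n}=P_{n}D_{n}P_{n}^{-1}$ involves a Vandermonde matrix $P_{n}$ built from $\lambda_{n,1},\ldots,\lambda_{n,p}$. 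Since these eigenvalues stay uniformly bounded and converge to distinct limits, $P_{n}\rightarrow P$ where $P$ is the Vandermonde matrix of the limiting eigenvalues, which is invertible. Therefore
$$\limn \nm P_{n} \nm = \nm P \nm < +\infty \hsp \text{and} \hsp \limn \nm P_{n}^{-1} \nm = \nm P^{-1} \nm < +\infty,$$
so there exists $\cst$ with $\nm P_{n} \nm\, \nm P_{n}^{-1} \nm \leq \cst$ for all $n$ large enough.

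Second, I would use this uniform control to upper-bound $\nm A_{n}^{\ell} \nm$. Submultiplicativity of the spectral norm yields
$$\nm A_{n}^{\ell} \nm = \nm P_{n} D_{n}^{\ell} P_{n}^{-1} \nm \leq \nm P_{n} \nm\, \nm P_{n}^{-1} \nm\, \nm D_{n}^{\ell} \nm \leq \cst\, \rho(A_{n})^{\ell},$$
since $\nm D_{n}^{\ell} \nm = \max_{i} |\lambda_{n,i}|^{\ell} = \rho(A_{n})^{\ell}$. Summing the resulting geometric series gives
$$\sum_{\ell=0}^{+\infty} \nm A_{n}^{\ell} \nm \leq \cst\, \sum_{\ell=0}^{+\infty} \rho(A_{n})^{\ell} = \frac{\cst}{1-\rho(A_{n})},$$
and, using $\sum_{\ell\geq 0}(\ell+1)r^{\ell}=(1-r)^{-2}$ with $r=\rho(A_{n})<1$,
$$\sum_{\ell=0}^{+\infty} (\ell+1)\nm A_{n}^{\ell} \nm \leq \cst\, \sum_{\ell=0}^{+\infty} (\ell+1)\rho(A_{n})^{\ell} = \frac{\cst}{(1-\rho(A_{n}))^{2}}.$$
Combined with the lower bounds, this yields both equivalences.

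The only non-routine step is checking that $\nm P_{n}\nm$ and $\nm P_{n}^{-1}\nm$ remain bounded as $n\rightarrow +\infty$; this is where the distinctness hypothesis \ref{HypCA} really bites, since otherwise $P_{n}^{-1}$ would blow up (a phenomenon precisely discussed in Section~\ref{SecMultVP}, where multiplicity of unit-modulus eigenvalues is shown to spoil the rate). Everything else reduces to submultiplicativity of the norm and the closed form of a geometric series, so I do not anticipate technical difficulties beyond the continuity-of-eigenvalues argument.
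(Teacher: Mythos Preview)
Your proposal is correct and follows essentially the same route as the paper: reduce to the upper bounds, diagonalize the companion matrix $A_{n}$ through a Vandermonde change of basis with uniformly bounded condition number, deduce $\nm A_{n}^{\ell}\nm\leq\cst\,\rho(A_{n})^{\ell}$ for large $n$, and sum the geometric series. The only cosmetic difference is that the paper bounds $\nm P_{n}\nm$ and $\nm P_{n}^{-1}\nm$ by writing out the Vandermonde inverse explicitly via Lagrange interpolating polynomials, whereas you argue more directly that $P_{n}\rightarrow P$ with $P$ invertible and invoke continuity of the inverse; both justifications are equivalent in strength and rest on the same distinct-eigenvalue hypothesis in \ref{HypCA}.
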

\begin{proof}
The lower bounds are established in Section \ref{SecComm}, in \eqref{Ln} and \eqref{Mn} precisely. For the upper bounds, fix
\begin{equation*}
\delta = \frac{2}{\vert \lambda_{p} \vert}, \hsp \epsilon_1 = \frac{1}{2}\, \min_{\substack{1\, \leq\, i,j\, \leq\, p \\ i\, \neq\, j}} \bigg\vert \frac{1}{\lambda_{i}} - \frac{1}{\lambda_{j}} \bigg\vert \hsp \text{and} \hsp \epsilon_2 = 2\, \max_{\substack{1\, \leq\, i,j\, \leq\, p \\ i\, \neq\, j}} \bigg\vert \frac{1}{\lambda_{i}} - \frac{1}{\lambda_{j}} \bigg\vert.
\end{equation*}
According to Thm. 2.4.9.2 of \cite{HornJohnson92}, \ref{HypCA} implies the existence of a rank $n_0 = n_0(\delta, \epsilon_1, \epsilon_2)$ such that, for all $n > n_0$, the eigenvalues of $A_{n}$ satisfy
\begin{equation}
\label{InegInvVP}
0 < \max_{1\, \leq\, i\, \leq\, p} \bigg\vert \frac{1}{\lambda_{n,\, i}} \bigg\vert < \delta
\end{equation}
and
\begin{equation}
\label{InegDiffInvVP}
\epsilon_1 < \min_{\substack{1\, \leq\, i,j\, \leq\, p \\ i\, \neq\, j}} \bigg\vert \frac{1}{\lambda_{n,\, i}} - \frac{1}{\lambda_{n,\, j}} \bigg\vert < \max_{\substack{1\, \leq\, i,j\, \leq\, p \\ i\, \neq\, j}} \bigg\vert \frac{1}{\lambda_{n,\, i}} - \frac{1}{\lambda_{n,\, j}} \bigg\vert < \epsilon_2.
\end{equation}
Let $P_{n}$ be a change of basis matrix in the diagonalization of $A_{n}$. Then, since $A_{n}$ is a companion matrix, a standard choice would be
\begin{equation}
\label{MatPass}
P_{n} = \begin{pmatrix}
1 & 1 & \hdots & 1 \\
\frac{1}{\lambda_{n,\, 1}} & \frac{1}{\lambda_{n,\, 2}} & \hdots & \frac{1}{\lambda_{n,\, p}} \\
\vdots & \vdots & & \vdots \\
\frac{1}{\lambda_{n,\, 1}^{p-1}} & \frac{1}{\lambda_{n,\, 2}^{p-1}} & \hdots & \frac{1}{\lambda_{n,\, p}^{p-1}}
\end{pmatrix}.
\end{equation}
This Vandermonde matrix is invertible if and only if $\lambda_{n,\, i} \neq \lambda_{n,\, j}$ for all $i \neq j$ (see \textit{e.g.} Sec. 0.9.11 of \cite{HornJohnson92}). In that case, $P_{n}^{\, -1}$ is closely related to the Lagrange interpolating polynomials given, for $i \in \{ 1, \hdots, p \}$, by
\begin{equation*}
L_{i}(X) = \frac{\prod_{j\, \neq\, i} (X-\frac{1}{\lambda_{n,\, j}})}{\prod_{j\, \neq\, i} (\frac{1}{\lambda_{n,\, i}}-\frac{1}{\lambda_{n,\, j}})}.
\end{equation*}
Precisely, the $i$--th row of $P_{n}^{\, -1}$ contains the coefficients of $L_{i}(X)$ in the basis $(1, X, \hdots, X^{p-1})$ of $\dR_{p-1}[X]$, \textit{i.e.}
\begin{equation}
\label{MatPassInv}
P_{n}^{\, -1} = \Bigg( \frac{p_{n,\, i,\, j}}{\prod_{j\, \neq\, i} {(\frac{1}{\lambda_{n,\, i}}-\frac{1}{\lambda_{n,\, j}})}} \Bigg)_{\!1\, \leq\, i,j\, \leq\, p}
\end{equation}
where the relation $\prod_{j\, \neq\, i} (X-\frac{1}{\lambda_{n,\, j}}) = p_{n,\, i,\, 1} + p_{n,\, i,\, 2}\, X + \hdots + p_{n,\, i,\, p}\, X^{p-1}$ enables to identify each $p_{n,\, i,\, j}$. Combining \eqref{InegInvVP} and \eqref{InegDiffInvVP}, it follows that, for all $n > n_0$,
\begin{equation*}
\nm P_{n} \nm_1\, \leq\, p\, (1 + \delta + \hdots + \delta^{\, p-1})\, \leq\, \cst.
\end{equation*}
We also have $\nm P_{n}^{\, -1} \nm_1 \leq \cst$ since $\epsilon_1^{\, p-1} < \prod_{j\, \neq\, i} \vert \frac{1}{\lambda_{n,\, i}}-\frac{1}{\lambda_{n,\, j}} \vert < \epsilon_2^{\, p-1}$ and since $p_{n,\, i,\, j}$ is a finite combination of sums and products of $\frac{1}{\lambda_{n,\, 1}}, \hdots, \frac{1}{\lambda_{n,\, p}}$. To sum up, for all $\ell \geq 0$ and $n > n_0$,
\begin{equation*}
A_{n}^{\ell} = P_{n}\, D_{n}^{\, \ell}\, P_{n}^{\, -1} \hsp \text{where} \hsp D_{n} = \diag(\lambda_{n,\, 1}, \hdots, \lambda_{n,\, p}).
\end{equation*}
Consequently,
\begin{eqnarray}
\label{InegNormAn}
\nm A_{n}^{\ell} \nm & = & \nm A_{n}^{\ell} \nm\, \ind_{\{ n\, \leq\, n_0 \}} + \nm P_{n}\, D_{n}^{\, \ell}\, P_{n}^{\, -1} \nm\, \ind_{\{ n\, >\, n_0 \}} \nonumber \\
 & \leq & \nm A_{n}^{\ell} \nm\, \ind_{\{ n\, \leq\, n_0 \}} + \nm P_{n} \nm\, \nm P_{n}^{\, -1} \nm\, \nm D_{n}^{\, \ell} \nm\, \ind_{\{ n\, >\, n_0 \}} \nonumber \\
 & \leq & \nm A_{n}^{\ell} \nm\, \ind_{\{ n\, \leq\, n_0 \}} + \cst\, \rho^{\ell}(A_{n})\, \ind_{\{ n\, >\, n_0 \}}.
\end{eqnarray}
It only remains to sum over $\ell$ and to let $n$ tend to infinity to reach the first result. Similarly,
\begin{equation*}
(\ell+1)\, \nm A_{n}^{\ell} \nm ~ \leq ~ (\ell+1)\, \nm A_{n}^{\ell} \nm\, \ind_{\{ n\, \leq\, n_0 \}} + \cst\, (\ell+1)\, \rho^{\ell}(A_{n})\, \ind_{\{ n\, >\, n_0 \}}
\end{equation*}
so we get the second result by following the same lines.
\end{proof}

\begin{lem}
\label{LemCvgPuissAn}
Under hypotheses \ref{HypCA} and \ref{HypSR}, we have the convergence
\begin{equation*}
\limn A_{n}^{w_{n}} = 0
\end{equation*}
for any rate $(w_{n})$ satisfying $w_{n}\, (1-\rho(A_{n})) \rightarrow +\infty$.
\end{lem}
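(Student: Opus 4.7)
The plan is to reuse the spectral decomposition already set up in the proof of Lemma \ref{LemEquivLn}. Under \ref{HypCA}, there exists a rank $n_0$ such that, for all $n > n_0$, the matrix $A_{n}$ is diagonalizable as $A_{n} = P_{n}\, D_{n}\, P_{n}^{\, -1}$ with $D_{n} = \diag(\lambda_{n,\, 1}, \hdots, \lambda_{n,\, p})$, and both $\nm P_{n} \nm$ and $\nm P_{n}^{\, -1} \nm$ are bounded above by a constant $\cst$ independent of $n$ (this is exactly what was extracted from \eqref{InegInvVP}--\eqref{MatPassInv}).

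Since the result is asymptotic, I would only treat $n > n_0$. Using submultiplicativity of the spectral norm,
\begin{equation*}
\nm A_{n}^{w_{n}} \nm = \nm P_{n}\, D_{n}^{w_{n}}\, P_{n}^{\, -1} \nm \leq \nm P_{n} \nm\, \nm P_{n}^{\, -1} \nm\, \nm D_{n}^{w_{n}} \nm \leq \cst\, \rho(A_{n})^{w_{n}},
\end{equation*}
since $\nm D_{n}^{w_{n}} \nm = \max_{i} \vert \lambda_{n,\, i} \vert^{w_{n}} = \rho(A_{n})^{w_{n}}$. It is then enough to prove that $\rho(A_{n})^{w_{n}} \to 0$.

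Setting $\delta_{n} = 1 - \rho(A_{n}) \in (0,1)$ by \ref{HypSR}, the assumption reads $w_{n}\, \delta_{n} \to +\infty$. Writing
\begin{equation*}
\rho(A_{n})^{w_{n}} = \exp\!\big( w_{n}\, \ln(1 - \delta_{n}) \big)
\end{equation*}
and using $\ln(1-\delta_{n}) \sim -\delta_{n}$ as $n \to +\infty$ (since $\delta_{n} \to 0$ by \ref{HypSR}), we obtain $w_{n}\, \ln(1 - \delta_{n}) \sim -w_{n}\, \delta_{n} \to -\infty$, so $\rho(A_{n})^{w_{n}} \to 0$ and the conclusion follows.

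There is no real obstacle here: everything has been prepared in Lemma \ref{LemEquivLn}. The only thing one has to check carefully is that the bound $\nm P_{n}^{\pm 1} \nm \leq \cst$ is genuinely uniform in $n$, which is granted by \eqref{InegInvVP}--\eqref{InegDiffInvVP}; the rest is the standard estimate $(1-\delta_{n})^{w_{n}} \to 0$ whenever $w_{n}\delta_{n} \to +\infty$.
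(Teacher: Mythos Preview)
Your proof is correct and follows essentially the same route as the paper: you reuse the diagonalization and the uniform bounds on $\nm P_{n}^{\pm 1} \nm$ from Lemma \ref{LemEquivLn} to reduce to $\rho(A_{n})^{w_{n}}$, and then conclude via $\ln(1-\delta_{n}) \sim -\delta_{n}$. The paper does exactly this, writing the last step more tersely as $\rho^{w_{n}}(A_{n}) = \de^{-w_{n}(1-\rho(A_{n}))} \to 0$.
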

\begin{proof}
Consider the rank $n_0$ introduced in the proof of Lemma \ref{LemEquivLn}. Then, according to the inequality \eqref{InegNormAn},
\begin{equation}
\label{InegNormA}
\nm A_{n}^{w_{n}} \nm ~\leq~ \nm A_{n}^{w_{n}} \nm\, \ind_{\{ n\, \leq\, n_0 \}} + \cst\, \rho^{\, w_{n}}(A_{n})\, \ind_{\{ n\, >\, n_0 \}}
\end{equation}
where the invertible and uniformly bounded matrices $P_{n}$ and $P_{n}^{\, -1}$ are given in \eqref{MatPass} and \eqref{MatPassInv}, respectively. We also have
\begin{equation}
\label{LimSR}
\limn \rho^{\, w_{n}}(A_{n}) = \limn \de^{\, -w_{n}\, (1-\rho(A_{n}))} = 0
\end{equation}
from the hypothesis on $(w_{n})$. It remains to let $n$ tend to infinity in the above inequality.
\end{proof}

\subsection{Proofs of the main results}
\label{SecProThm}

First of all, it is convenient to express the empirical variance of the process as
\begin{eqnarray*}
\frac{1}{n} \sum_{k=1}^{n} (\Phi_{n,\, k}\, \Phi_{n,\, k}\tr - \Gamma_{n}) & = & \frac{1}{n} \sum_{k=1}^{n} A_{n}\, \Phi_{n,\, k-1}\, \Phi_{n,\, k-1}\tr\, A_{n}\tr + \frac{1}{n} \sum_{k=1}^{n} A_{n}\, \Phi_{n,\, k-1}\, E_{k}\tr \\
 & & \hsp +~ \frac{1}{n} \sum_{k=1}^{n} E_{k}\, \Phi_{n,\, k-1}\tr\, A_{n}\tr + \frac{1}{n} \sum_{k=1}^{n} E_{k}\, E_{k}\tr - \Gamma_{n} \\
 & = & \frac{1}{n} \sum_{k=1}^{n} \Delta_{n,\, k} + \frac{1}{n} \sum_{k=1}^{n} A_{n}\, (\Phi_{n,\, k}\, \Phi_{n,\, k}\tr - \Gamma_{n})\, A_{n}\tr - \frac{T_{n}}{n}
\end{eqnarray*}
where the variance $\Gamma_{n}$ is given in \eqref{Var},
\begin{eqnarray}
\label{Delta}
\Delta_{n} & = & \frac{1}{n} \sum_{k=1}^{n} \Delta_{n,\, k} \\
 & = & \frac{1}{n} \sum_{k=1}^{n} (A_{n}\, \Phi_{n,\, k-1}\, E_{k}\tr + E_{k}\, \Phi_{n,\, k-1}\tr\, A_{n}\tr + E_{k}\, E_{k}\tr + A_{n}\, \Gamma_{n}\, A_{n}\tr - \Gamma_{n}) \nonumber
\end{eqnarray}
and the residual term is
\begin{equation*}
T_{n} = A_{n}\, (\Phi_{n,\, n}\, \Phi_{n,\, n}\tr - \Phi_{n,\, 0}\, \Phi_{n,\, 0}\tr)\, A_{n}\tr.
\end{equation*}
Then, solving this generalized Sylvester equation (Lem. 2.1 of \cite{JiangWei03}) and considering the invertibility of $B_{n}$ in \eqref{MatB} which is proved at the beggining of Section \ref{SecComm}, we reach the decomposition
\begin{equation}
\label{DecompVar}
\vec\bigg( \frac{1}{n} \sum_{k=1}^{n} (\Phi_{n,\, k}\, \Phi_{n,\, k}\tr - \Gamma_{n}) \bigg) = B_{n}^{-1}\, \vec(\Delta_{n}) - \frac{B_{n}^{-1}\, \vec(T_{n})}{n}.
\end{equation}
Let us now reason step by step, \textit{via} some intermediate results.

\subsubsection{Exponential moments of the squared initial value}
\label{SecInit}

We recall that, from the causal form \eqref{CausalForm} of the process,
\begin{equation*}
\Phi_{n,\, 0} = \sum_{\ell=0}^{+\infty} A_{n}^{\ell}\, E_{-\ell}.
\end{equation*}
The following result gives an exponential moment for the correctly renormalized squared initial value.

\begin{lem}
\label{LemExpInit}
Under hypothesis \ref{HypGI},
\begin{equation*}
\dE\bigg[ \exp\Big( \frac{\alpha}{L_{n}^2}\, \nm \Phi_{n,\, 0}\, \Phi_{n,\, 0}\tr \nm \Big) \bigg] < +\infty
\end{equation*}
where $L_{n}$ is given in \eqref{Ln}.
\end{lem}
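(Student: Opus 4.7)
The plan is to exploit the causal representation $\Phi_{n,\, 0} = \sum_{\ell\geq 0} A_{n}^{\ell}\, E_{-\ell}$ together with the Gaussian integrability assumption \ref{HypGI} via a convexity (Jensen) argument, using $L_{n}$ as a natural normalizing factor because the quantities $\nm A_{n}^{\ell} \nm / L_{n}$ form a probability distribution on $\mathbb{N}$.

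First, I would observe that $\Phi_{n,\, 0}\, \Phi_{n,\, 0}\tr$ is a rank-one positive semidefinite matrix, hence $\nm \Phi_{n,\, 0}\, \Phi_{n,\, 0}\tr \nm = \Vert \Phi_{n,\, 0} \Vert^{2}$. Next, since $\Vert E_{k} \Vert = \vert \veps_{k} \vert$, the triangle inequality applied to the causal expansion gives
\begin{equation*}
\Vert \Phi_{n,\, 0} \Vert\, \leq\, \sum_{\ell=0}^{+\infty} \nm A_{n}^{\ell} \nm\, \vert \veps_{-\ell} \vert.
\end{equation*}
A Cauchy--Schwarz splitting with weights $\nm A_{n}^{\ell} \nm^{1/2}$ on each factor then yields
\begin{equation*}
\Vert \Phi_{n,\, 0} \Vert^{2}\, \leq\, \bigg( \sum_{\ell=0}^{+\infty} \nm A_{n}^{\ell} \nm \bigg) \bigg( \sum_{\ell=0}^{+\infty} \nm A_{n}^{\ell} \nm\, \veps_{-\ell}^{\, 2} \bigg)\, =\, L_{n}\, \sum_{\ell=0}^{+\infty} \nm A_{n}^{\ell} \nm\, \veps_{-\ell}^{\, 2}.
\end{equation*}

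Dividing by $L_{n}^{\, 2}$, the weights $p_{\ell} = \nm A_{n}^{\ell} \nm / L_{n}$ sum to one, so Jensen's inequality applied to the convex function $x \mapsto \de^{\alpha x}$ gives
\begin{equation*}
\exp\Big( \frac{\alpha}{L_{n}^{\, 2}}\, \nm \Phi_{n,\, 0}\, \Phi_{n,\, 0}\tr \nm \Big)\, \leq\, \sum_{\ell=0}^{+\infty} p_{\ell}\, \de^{\alpha\, \veps_{-\ell}^{\, 2}}.
\end{equation*}
Taking expectation and using the i.i.d. property of $(\veps_{k})_{k}$ to recognize $\dE[\de^{\alpha \veps_{-\ell}^{\, 2}}] = \dE[\de^{\alpha \veps_{1}^{\, 2}}]$ independently of $\ell$, one obtains
\begin{equation*}
\dE\Big[ \exp\Big( \frac{\alpha}{L_{n}^{\, 2}}\, \nm \Phi_{n,\, 0}\, \Phi_{n,\, 0}\tr \nm \Big) \Big]\, \leq\, \dE\big[ \de^{\alpha\, \veps_{1}^{\, 2}} \big]\, <\, +\infty
\end{equation*}
by \ref{HypGI}, which gives a bound uniform in $n$. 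I do not anticipate any real obstacle; the only slightly delicate point is to choose the Cauchy--Schwarz weights so that the exponent becomes an $L_{n}$-weighted average, turning the squared sum into an object to which Jensen's inequality is directly applicable.
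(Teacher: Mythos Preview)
Your proposal is correct and follows essentially the same approach as the paper's proof: the paper also bounds $\nm \Phi_{n,0}\Phi_{n,0}\tr\nm$ by $\Vert\Phi_{n,0}\Vert^2$, applies the Cauchy--Schwarz splitting with weights $\nm A_n^\ell\nm^{1/2}$ to obtain $\Vert\Phi_{n,0}\Vert^2\leq L_n\sum_\ell \nm A_n^\ell\nm\,\veps_{-\ell}^{\,2}$, and then uses Jensen's inequality with the probability weights $\nm A_n^\ell\nm/L_n$ before taking expectation. Your write-up is in fact a bit more explicit about why the weights form a probability distribution and why the spectral norm equals the squared Euclidean norm.
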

\begin{proof}
By Cauchy-Schwarz inequality,
\begin{eqnarray*}
\nm \Phi_{n,\, 0}\, \Phi_{n,\, 0}\tr \nm ~ \leq ~ \Vert \Phi_{n,\, 0} \Vert^2 & \leq & \bigg( \sum_{\ell=0}^{+\infty} \Vert A_{n}^{\ell}\, E_{-\ell} \Vert \bigg)^{\! 2} \\
 & \leq & \bigg( \sum_{\ell=0}^{+\infty} \nm A_{n}^{\ell} \nm^{\frac{1}{2}}\, \nm A_{n}^{\ell} \nm^{\frac{1}{2}}\, \Vert E_{-\ell} \Vert \bigg)^{\! 2} ~ \leq ~ L_{n} \sum_{\ell=0}^{+\infty} \nm A_{n}^{\ell} \nm\, \veps_{-\ell}^{\, 2}.
\end{eqnarray*}
Moreover, from Jensen's inequality, for all $\lambda > 0$,
\begin{equation*}
\exp\bigg( \frac{\lambda}{L_{n}} \sum_{\ell=0}^{+\infty} \nm A_{n}^{\ell} \nm\, \veps_{-\ell}^{\, 2} \bigg)\, \leq\, \frac{1}{L_{n}} \sum_{\ell=0}^{+\infty} \nm A_{n}^{\ell} \nm\, \de^{\lambda\, \veps_{-\ell}^{\, 2}}
\end{equation*}
using $\frac{\nm A_{n}^{0} \nm}{L_{n}} + \frac{\nm A_{n}^{1} \nm}{L_{n}} + \hdots = 1$. Taking the expectation and choosing $\lambda = \alpha$ given in \ref{HypGI}, we deduce that
\begin{eqnarray}
\label{ExpMomInit}
\dE\bigg[ \exp\Big( \frac{\alpha}{L_{n}^2}\, \nm \Phi_{n,\, 0}\, \Phi_{n,\, 0}\tr \nm \Big) \bigg] & \leq & \frac{1}{L_{n}} \sum_{\ell=0}^{+\infty} \nm A_{n}^{\ell} \nm\, \dE\big[ \de^{\alpha\, \veps_{-\ell}^{\, 2}} \big] \nonumber \\
 & = & \dE\big[ \de^{\alpha\, \veps_1^{\, 2}} \big] < +\infty.
\end{eqnarray}
\end{proof}

\subsubsection{Exponential convergence of the residual term}
\label{SecRes}

The residual term in the decomposition \eqref{DecompVar} is given by
\begin{equation}
\label{Res}
R_{n} = \frac{B_{n}^{-1}\, \vec( A_{n}\, (\Phi_{n,\, n}\, \Phi_{n,\, n}\tr - \Phi_{n,\, 0}\, \Phi_{n,\, 0}\tr)\, A_{n}\tr )}{n}.
\end{equation}
Our next objective is to prove the exponential negligibility of this residual.

\begin{lem}
\label{LemExpRes}
Under hypotheses \ref{HypGI}--\ref{HypMD}, for all $r > 0$,
\begin{equation*}
\limn \frac{1}{b_{n}^{\, 2}} \ln \dP\bigg( \frac{\sqrt{n}\, (1 - \rho(A_{n}))^{\frac{3}{2}}}{b_{n}}\, \Vert R_{n} \Vert \geq r \bigg) = -\infty.
\end{equation*}
\end{lem}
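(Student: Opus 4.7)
My plan is to bound $\|R_n\|$ deterministically by norms of the endpoint vectors $\Phi_{n,0}$ and $\Phi_{n,n}$, exploit strict stationarity of the process to reduce everything to $\Phi_{n,0}$, and then apply a Markov-type exponential inequality via Lemma~\ref{LemExpInit}. Since $\Vert \vec(M) \Vert = \nm M \nm_F \leq \cst\, \nm M \nm$ and $\nm xx\tr \nm = \Vert x \Vert^2$, submultiplicativity of the spectral norm together with \ref{HypCA} (which ensures $\nm A_n \nm$ is bounded for $n$ large) yields
\begin{equation*}
\Vert R_n \Vert ~\leq~ \cst \, \frac{\nm B_n^{-1} \nm}{n}\, \big( \Vert \Phi_{n,n} \Vert^2 + \Vert \Phi_{n,0} \Vert^2 \big).
\end{equation*}
By \ref{HypRC} one has $\nm B_n^{-1} \nm \asymp (1-\rho(A_n))^{-1}$, so the condition $\frac{\sqrt{n}(1-\rho(A_n))^{3/2}}{b_n}\Vert R_n \Vert \geq r$ forces $\Vert \Phi_{n,n} \Vert^2 + \Vert \Phi_{n,0} \Vert^2 \geq t_n$ with $t_n = c_r\, \sqrt{n}\, b_n\, (1-\rho(A_n))^{-1/2}$ for some $c_r > 0$.

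Next, the causal form \eqref{CausalForm} shows that $(\Phi_{n,k})_k$ is strictly stationary, so $\Phi_{n,n}$ and $\Phi_{n,0}$ share the same law and a union bound gives
\begin{equation*}
\dP\bigg( \frac{\sqrt{n}(1-\rho(A_n))^{3/2}}{b_n}\Vert R_n \Vert \geq r \bigg) \leq 2\, \dP\big( \Vert \Phi_{n,0} \Vert^2 \geq t_n/2 \big).
\end{equation*}
Now apply the exponential Markov inequality using Lemma~\ref{LemExpInit}: since $\Vert \Phi_{n,0} \Vert^2 = \nm \Phi_{n,0}\Phi_{n,0}\tr \nm$,
\begin{equation*}
\dP\big( \Vert \Phi_{n,0} \Vert^2 \geq t_n/2 \big) \leq \dE\big[ \de^{\alpha\, \veps_1^{\, 2}} \big]\, \exp\!\Big( -\frac{\alpha\, t_n}{2\, L_n^2} \Big).
\end{equation*}

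The key scaling computation then uses $L_n^2 \asymp (1-\rho(A_n))^{-2}$ from Lemma~\ref{LemEquivLn}, so that
\begin{equation*}
\frac{\alpha\, t_n}{2\, L_n^2}\, \asymp\, (1-\rho(A_n))^2 \cdot \frac{\sqrt{n}\, b_n}{(1-\rho(A_n))^{1/2}}\, =\, c'_r \sqrt{n}\, b_n\, (1-\rho(A_n))^{3/2}.
\end{equation*}
Dividing the log-probability by $b_n^2$ then gives an upper bound of order $-c'_r \,\frac{\sqrt{n}(1-\rho(A_n))^{3/2}}{b_n}$, up to a $o(1)$ term from the constant.

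Finally, we use \ref{HypMD}: writing $(1-\rho(A_n))^{3/2} = (1-\rho(A_n))^{3/2+\eta}\cdot (1-\rho(A_n))^{-\eta}$, the first factor guarantees $\frac{\sqrt{n}(1-\rho(A_n))^{3/2+\eta}}{b_n} \to +\infty$ while \ref{HypSR} makes $(1-\rho(A_n))^{-\eta} \to +\infty$, so $\frac{\sqrt{n}(1-\rho(A_n))^{3/2}}{b_n} \to +\infty$ and the $\frac{1}{b_n^2}\ln \dP$ quantity tends to $-\infty$. The only mildly delicate point is the matching of scales: the renormalization in the statement is exactly $\sqrt{n}(1-\rho(A_n))^{3/2}/b_n$, and this is precisely what appears after combining the $n^{-1}$ prefactor in $R_n$, the $(1-\rho(A_n))^{-1}$ blow-up of $\nm B_n^{-1} \nm$, and the $(1-\rho(A_n))^{-2}$ normalization dictated by Lemma~\ref{LemExpInit}. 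Nothing beyond these ingredients is needed.
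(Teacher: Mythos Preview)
Your proof is correct and follows essentially the same route as the paper: bound $\Vert R_n \Vert$ by $\cst\, n^{-1}\nm B_n^{-1}\nm\,(\Vert \Phi_{n,n}\Vert^2+\Vert \Phi_{n,0}\Vert^2)$, use strict stationarity and a union bound to reduce to $\Phi_{n,0}$, apply the exponential Markov inequality via Lemma~\ref{LemExpInit}, and conclude with the rate estimates $\nm B_n^{-1}\nm \asymp (1-\rho(A_n))^{-1}$ and $L_n^2 \asymp (1-\rho(A_n))^{-2}$. Your final justification that $\sqrt{n}(1-\rho(A_n))^{3/2}/b_n \to +\infty$ by splitting off a factor $(1-\rho(A_n))^{-\eta}$ is slightly more explicit than the paper, which simply invokes \ref{HypMD}.
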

\begin{proof}
First, note that
\begin{eqnarray*}
\Vert R_{n} \Vert & \leq & \frac{\nm B_{n}^{-1} \nm\, \Vert \vec( A_{n}\, (\Phi_{n,\, n}\, \Phi_{n,\, n}\tr - \Phi_{n,\, 0}\, \Phi_{n,\, 0}\tr)\, A_{n}\tr ) \Vert}{n} \\
 & \leq & \frac{\cst\, \nm B_{n}^{-1} \nm\, \nm A_{n} \nm^2\, \nm \Phi_{n,\, n}\, \Phi_{n,\, n}\tr - \Phi_{n,\, 0}\, \Phi_{n,\, 0}\tr \nm}{n} \\
 & \leq & \frac{\cst\, \nm B_{n}^{-1} \nm\, \nm A_{n} \nm^2\, (\nm \Phi_{n,\, n}\, \Phi_{n,\, n}\tr \nm + \nm \Phi_{n,\, 0}\, \Phi_{n,\, 0}\tr \nm)}{n}.
\end{eqnarray*}
Thus,
\begin{eqnarray*}
\dP\bigg( \frac{\sqrt{n}\, (1 - \rho(A_{n}))^{\frac{3}{2}}}{b_{n}}\, \Vert R_{n} \Vert \geq r \bigg) & = & \dP\bigg( \Vert R_{n} \Vert \geq \frac{r\, b_{n}\, (1 - \rho(A_{n}))^{-\frac{3}{2}}}{\sqrt{n}} \bigg) \\
 & \leq & 2\, \dP\bigg( \nm \Phi_{n,\, 0}\, \Phi_{n,\, 0}\tr \nm \geq \frac{r\, b_{n} \sqrt{n}\, (1 - \rho(A_{n}))^{-\frac{3}{2}}}{2\, \cst\, \nm A_{n} \nm^2\, \nm B_{n}^{-1} \nm} \bigg) \\
 & \leq & 2\, \dE\big[ \de^{\alpha\, \veps_1^{\, 2}} \big] \exp\bigg(\! -\frac{r\, \alpha\, b_{n} \sqrt{n}\, (1 - \rho(A_{n}))^{-\frac{3}{2}}}{2\, \cst\, \nm A_{n} \nm^2\, \nm B_{n}^{-1} \nm\, L_{n}^2} \bigg)
\end{eqnarray*}
where $L_{n}$ is given in \eqref{Ln}, using Markov's inequality, the reasoning in the proof of Lemma \ref{LemExpInit} and the fact that, from the strict stationarity of the process, $\Phi_{n,\, 0}\, \Phi_{n,\, 0}\tr$ and $\Phi_{n,\, n}\, \Phi_{n,\, n}\tr$ share the same distribution. Hence, for a sufficiently large $n$,
\begin{eqnarray*}
\frac{1}{b_{n}^{\, 2}} \ln \dP\bigg( \frac{\sqrt{n}\, (1 - \rho(A_{n}))^{\frac{3}{2}}}{b_{n}}\, \Vert R_{n} \Vert \geq r \bigg) & \leq & \frac{\ln 2 + \ln \dE\big[ \de^{\alpha\, \veps_1^{\, 2}} \big]}{b_{n}^{\, 2}} - \frac{r\, \alpha\, \sqrt{n}\, (1 - \rho(A_{n}))^{-\frac{3}{2}}}{2\, \cst\, b_{n}\, \nm A_{n} \nm^2\, \nm B_{n}^{-1} \nm\, L_{n}^2} \\
 & \leq & \frac{\ln 2 + \ln \dE\big[ \de^{\alpha\, \veps_1^{\, 2}} \big]}{b_{n}^{\, 2}} - \cst\, \frac{\sqrt{n}\, (1 - \rho(A_{n}))^{\frac{3}{2}}}{b_{n}}
\end{eqnarray*}
since $\nm B_{n}^{-1} \nm^{\frac{1}{2}} \sim \sqrt{h}\, (1 - \rho(A_{n}))^{-\frac{1}{2}}$ from \ref{HypRC}, $L_{n}^2 = O((1 - \rho(A_{n}))^{-2})$ from Lemma \ref{LemEquivLn} and since, from \ref{HypCA}, $\nm A_{n} \nm$ converges. Finally, letting $n$ tend to infinity, \ref{HypGI} and \ref{HypMD} conclude the proof.
\end{proof}

\subsubsection{The truncated sequence}
\label{SecTrunc}

In what follows, we define the rate
\begin{equation}
\label{RateMn}
m_{n} = \left\lfloor \bigg( \frac{1}{1 - \rho(A_{n})} \bigg)^{\! \frac{3 + 3 \eta}{3 + 2 \eta}} \right\rfloor
\end{equation}
and we note from \ref{HypSR}--\ref{HypMD} that
\begin{equation}
\label{PropMn}
\limn m_{n} (1 - \rho(A_{n})) = +\infty \hsp \text{and} \hsp \limn \frac{b_{n}\, \nm B_{n}^{-1} \nm^{\frac{1}{2}}\, m_{n}^{1+\frac{2 \eta}{3}}}{\sqrt{n}} = 0.
\end{equation}
Following the idea of \cite{MiaoWangYang15}, we are going to use $m_{n}$ as a truncation parameter. Consider
\begin{equation}
\label{PsiTrunc}
\Psi_{n,\, k} = \sum_{\ell=0}^{m_{n}-2} A_{n}^{\ell}\, E_{k-\ell}
\end{equation}
as an approximation of $\Phi_{n,\, k}$ in its causal form \eqref{CausalForm}. We also define the truncated version of the summands $\Delta_{n,\, k}$ in \eqref{Delta} as
\begin{equation}
\label{ZetaTrunc}
\zeta_{n,\, k} = A_{n}\, \Psi_{n,\, k-1}\, E_{k}\tr + E_{k}\, \Psi_{n,\, k-1}\tr\, A_{n}\tr + E_{k}\, E_{k}\tr + A_{n}\, \Gamma_{n}\, A_{n}\tr - \Gamma_{n}.
\end{equation}
The process $(B_{n}^{-1}\, \vec(\zeta_{n,\, k}))_{k}$ is strictly stationary and $m_{n}$--dependent, according to Def. 6.4.3 of \cite{BrockwellDavis91}. Let us study some properties of this process.

\begin{lem}
\label{LemMomExpTrunc}
Under hypotheses \ref{HypGI}--\ref{HypRC}, we can find a constant $c_{\alpha} > 0$ such that, for a sufficiently large $n$,
\begin{equation*}
\dE\bigg[ \exp\Big( c_{\alpha}\, \nm B_{n}^{-1} \nm^{-1}\, \sum_{\ell=0}^{w_{n}} \nm A_{n}^{\ell}\, E_{-\ell}\, E_1\tr \nm \Big) \bigg]\, \leq\, \dE\big[ \de^{\alpha\, \veps_1^{\, 2}} \big]
\end{equation*}
for any rate $(w_{n})$ satisfying $w_{n}\, (1-\rho(A_{n})) \rightarrow +\infty$.
\end{lem}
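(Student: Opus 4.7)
The plan is to exploit two features jointly: the rank-one structure of $E_{-\ell} E_1\tr$, which lets me pull out the innovations as scalars, and the independence between $\veps_1$ and the past noises $(\veps_{-\ell})_{\ell \geq 0}$, which will allow a factorization of the expectation into two tractable pieces. Since $E_k = \veps_k U_p$, I have $E_{-\ell} E_1\tr = \veps_{-\ell} \veps_1\, U_p U_p\tr$, a rank-one matrix of operator norm $|\veps_{-\ell}||\veps_1|$, so sub-multiplicativity yields the pointwise bound
\[
\nm A_n^{\ell}\, E_{-\ell}\, E_1\tr \nm \leq \nm A_n^{\ell} \nm \, |\veps_1|\, |\veps_{-\ell}|,
\]
reducing the object of interest to a bilinear form in the noises weighted by the matrix norms $\nm A_n^{\ell} \nm$.

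Next, I would decouple the two copies of noise through Young's inequality $|ab| \leq (a^2 + b^2)/2$: writing $S_n = \sum_{\ell=0}^{w_n} \nm A_n^{\ell} \nm \leq L_n$, summation gives
\[
\sum_{\ell=0}^{w_n} \nm A_n^{\ell}\, E_{-\ell}\, E_1\tr \nm \leq \frac{S_n}{2}\, \veps_1^{\, 2} + \frac{1}{2} \sum_{\ell=0}^{w_n} \nm A_n^{\ell} \nm\, \veps_{-\ell}^{\, 2}.
\]
Set $c = c_{\alpha} \nm B_n^{-1} \nm^{-1}$. Because $\veps_1$ and $(\veps_{-\ell})_{\ell \geq 0}$ are independent, the exponential of the sum splits as a product of two independent terms whose expectations factorize. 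For the second term I would replay verbatim the Jensen trick of Lemma \ref{LemExpInit}, with convex weights $\pi_{\ell} = \nm A_n^{\ell} \nm / S_n$, which produces
\[
\dE\Big[\exp\Big(\tfrac{c}{2} \sum_{\ell=0}^{w_n} \nm A_n^{\ell} \nm\, \veps_{-\ell}^{\, 2}\Big)\Big] \leq \dE\big[\exp(c S_n \veps_1^{\, 2}/2)\big],
\]
so that altogether the left-hand side of the lemma is bounded by $\dE[\exp(c S_n \veps_1^{\, 2}/2)]^{2}$.

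It only remains to calibrate $c_{\alpha}$ so that this square can be absorbed into $\dE[\de^{\alpha \veps_1^{\, 2}}]$. By Lemma \ref{LemEquivLn} and hypothesis \ref{HypRC}, both $L_n$ and $\nm B_n^{-1} \nm$ are of order $(1 - \rho(A_n))^{-1}$, so $\nm B_n^{-1} \nm^{-1}\, L_n$ is eventually bounded above by some constant $C > 0$; choosing $c_{\alpha} = \alpha / C$ ensures $c S_n / 2 \leq \alpha / 2$ for $n$ large. A single application of the Cauchy-Schwarz inequality (or of Jensen to the convex function $x \mapsto x^{2}$) gives $\dE[\exp(\alpha \veps_1^{\, 2}/2)]^{2} \leq \dE[\de^{\alpha \veps_1^{\, 2}}]$, exactly the claimed bound. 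I do not foresee any serious obstacle: the plan is mechanical, and the only delicate point is the book-keeping of the factor $1/2$ introduced by Young's inequality, which forces the tighter calibration $c_{\alpha}\, C \leq \alpha$ rather than $2 \alpha$. Note that the assumption $w_n(1 - \rho(A_n)) \rightarrow +\infty$ plays no direct role in the estimate itself, since everything goes through $S_n \leq L_n$; it will matter later, when this lemma is combined with truncations to control $\Phi_{n, k} - \Psi_{n, k}$.
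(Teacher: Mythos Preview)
Your proof is correct and follows essentially the same route as the paper: both arguments reduce to controlling $\dE[\exp(c_{\alpha}\,\nm B_n^{-1}\nm^{-1}\,S_n\,\veps_1^{\,2})]$ and then bound $\nm B_n^{-1}\nm^{-1}\,S_n$ by a constant via Lemma~\ref{LemEquivLn} and \ref{HypRC}; the paper compresses the reduction into a single ``by H\"older's inequality'' line, while you make the Young/independence/Jensen/Cauchy--Schwarz chain explicit. Your remark that the hypothesis $w_n(1-\rho(A_n))\to\infty$ is not actually needed here (since $S_n\leq L_n$ suffices) is correct.
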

\begin{proof}
By H\"older's inequality,
\begin{equation*}
\dE\bigg[ \exp\Big( c_{\alpha}\, \nm B_{n}^{-1} \nm^{-1}\, \sum_{\ell=0}^{w_{n}} \nm A_{n}^{\ell}\, E_{-\ell}\, E_1\tr \nm \Big) \bigg]\, \leq\, \dE\bigg[ \exp\Big( c_{\alpha}\, \nm B_{n}^{-1} \nm^{-1}\, \sum_{\ell=0}^{w_{n}} \nm A_{n}^{\ell} \nm\, \veps_1^{\, 2} \Big) \bigg].
\end{equation*}
Moreover, for the rank $n_0$ and the uniformly bounded matrices $P_{n}$ and $P_{n}^{\, -1}$ introduced in the proof of Lemma \ref{LemEquivLn},
\begin{eqnarray*}
\sum_{\ell=0}^{w_{n}} \nm A_{n}^{\ell} \nm & = & \sum_{\ell=0}^{n_0} \nm A_{n}^{\ell} \nm + \sum_{\ell=n_0+1}^{w_{n}} \nm A_{n}^{\ell} \nm \\
 & = & \sum_{\ell=0}^{n_0} \nm A_{n}^{\ell} \nm + \sum_{\ell=n_0+1}^{w_{n}} \nm P_{n}\, D_{n}^{\, \ell}\, P_{n}^{\, -1} \nm ~ \leq ~ \cst\, \bigg( 1 + \frac{1 - \rho^{\, w_{n}}(A_{n}) }{1 - \rho(A_{n})} \bigg)
\end{eqnarray*}
as soon as $w_{n} > n_0$. Thus,
\begin{equation*}
\nm B_{n}^{-1} \nm^{-1}\, \sum_{\ell=0}^{w_{n}} \nm A_{n}^{\ell} \nm\, \leq\, \cst\, \bigg( \nm B_{n}^{-1} \nm^{-1} + \frac{1 - \rho^{\, w_{n}}(A_{n})}{\nm B_{n}^{-1} \nm\, (1 - \rho(A_{n}))} \bigg).
\end{equation*}
Finally, \ref{HypRC}, \eqref{LimBL} and \eqref{LimSR} lead, for large values of $n$, to
\begin{equation*}
\nm B_{n}^{-1} \nm^{-1}\, \sum_{\ell=0}^{w_{n}} \nm A_{n}^{\ell} \nm\, \leq\, \cst.
\end{equation*}
It remains to choose $c_{\alpha} = \frac{\alpha}{\cst}$.
\end{proof}

\begin{lem}
\label{LemVarTrunc}
Under hypotheses \ref{HypCA}--\ref{HypRC}, for all $n \geq 1$ and $k \in \{ 1, \hdots, n \}$,
\begin{equation*}
\dE[\vec(\zeta_{n,\, k})] = 0 \hsp \text{and} \hsp \cov(\vec(\zeta_{n,\, k}), \vec(\zeta_{n,\, j})) = \left\{
\begin{array}{ll}
0 & \mbox{for } k \neq j \\
\Upsilon_{n} & \mbox{for } k = j
\end{array}
\right. 
\end{equation*}
where the $p^2 \times p^2$ covariance $\Upsilon_{n}$ can be explicitely built in terms of $\sigma^2$, $A_{n}$ and $B_{n}$. In addition,
\begin{equation*}
\limn \frac{B_{n}^{-1}\, \Upsilon_{n}\, (B_{n}^{-1})\tr}{\nm B_{n}^{-1} \nm^{\, 3}} = \Upsilon
\end{equation*}
where the non-zero limiting matrix $\Upsilon$ is given in \eqref{LimVarRenorm}.
\end{lem}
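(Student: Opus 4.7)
The plan is to expand $\zeta_{n,\, k}$ into three elementary blocks, exploit the independence of $E_k$ from the past, and then pass to the limit \textit{via} \ref{HypRC}. Using \eqref{RelGammaK} one has $A_n\, \Gamma_n\, A_n\tr-\Gamma_n=-\sigma^2\, K_p$, whence
\[
\zeta_{n,\, k}=\xi_1(k)+\xi_2(k)+\xi_3(k),
\]
with $\xi_1(k)=A_n\, \Psi_{n,\, k-1}\, E_k\tr$, $\xi_2(k)=\xi_1(k)\tr$ and $\xi_3(k)=E_k\, E_k\tr-\sigma^2\, K_p$. Since $E_k$ is independent of the centered vector $\Psi_{n,\, k-1}$ and $\dE[\xi_3(k)]=0$, we immediately obtain $\dE[\zeta_{n,\, k}]=0$. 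For $k>j$, $E_k$ is independent of the pair $(\Psi_{n,\, k-1},\zeta_{n,\, j})$ and $\xi_3(k)$ depends only on $E_k$, hence is independent of $\zeta_{n,\, j}$. Conditioning on $(\Psi_{n,\, k-1},\zeta_{n,\, j})$ in the two linear-in-$E_k$ terms kills their contributions through $\dE[E_k]=0$, while $\dE[\xi_3(k)]=0$ kills the third, so that $\cov(\vec(\zeta_{n,\, k}),\vec(\zeta_{n,\, j}))=0$ for $k\neq j$.

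To compute $\Upsilon_n=\dE[\vec(\zeta_{n,\, k})\, \vec(\zeta_{n,\, k})\tr]$ one first observes that the cross covariances between $\xi_1(k)$ or $\xi_2(k)$ and $\xi_3(k)$ factor through $\dE[\Psi_{n,\, k-1}]=0$ and therefore vanish, regardless of the skewness of $\veps_1$. Writing $E_k=\veps_k\, U_p$ and introducing the truncated variance
\[
V_n=\dE[\Psi_{n,\, k-1}\, \Psi_{n,\, k-1}\tr]=\sigma^2\sum_{\ell=0}^{m_n-2} A_n^{\ell}\, K_p\, (A_n\tr)^{\ell},
\]
the vectorization identity $\vec(A_n\, \Psi_{n,\, k-1}\, E_k\tr)=(E_k\otimes A_n)\Psi_{n,\, k-1}$ and the formula $(a\otimes B)C(a\tr\otimes B\tr)=(a a\tr)\otimes(B\, C\, B\tr)$ yield $\dE[\vec(\xi_1(k))\vec(\xi_1(k))\tr]=\sigma^2\, K_p\otimes(A_n\, V_n\, A_n\tr)$. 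Denoting by $K_{p,p}$ the $p^2\times p^2$ commutation matrix so that $\vec(\xi_2(k))=K_{p,p}\vec(\xi_1(k))$, and noting $\dE[\vec(\xi_3(k))\vec(\xi_3(k))\tr]=(\tau^4-\sigma^4)\vec(K_p)\vec(K_p)\tr$, we arrive at the explicit expression
\[
\Upsilon_n=\sigma^2(I_{p^2}+K_{p,p})\bigl(K_p\otimes A_n\, V_n\, A_n\tr\bigr)(I_{p^2}+K_{p,p})\tr+(\tau^4-\sigma^4)\vec(K_p)\vec(K_p)\tr.
\]

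For the asymptotics, the truncation tail $\Gamma_n-V_n=\sigma^2\sum_{\ell\geq m_n-1} A_n^{\ell}\, K_p\, (A_n\tr)^{\ell}$ has norm bounded by $\cst\, \mu_n^{2(m_n-1)}/(1-\rho(A_n))$ with $\mu_n=(1+\rho(A_n))/2$ (Section \ref{SecComm}); since $m_n(1-\rho(A_n))\to+\infty$ by \eqref{PropMn}, this is $o(\nm B_n^{-1}\nm)$, so that $V_n/\nm B_n^{-1}\nm\to\Gamma$ thanks to \eqref{LimGamGen}. Dividing \eqref{RelGammaK} by $\nm B_n^{-1}\nm$ and passing to the limit yields $A\, \Gamma\, A\tr=\Gamma$, hence $A_n\, V_n\, A_n\tr/\nm B_n^{-1}\nm\to\Gamma$ as well. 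The second piece $(\tau^4-\sigma^4)\vec(K_p)\vec(K_p)\tr$ is $O(1)$ and therefore negligible after division by $\nm B_n^{-1}\nm$. Sandwiching with $B_n^{-1}/\nm B_n^{-1}\nm\to H$ from \ref{HypRC} finally produces
\[
\limn \frac{B_n^{-1}\, \Upsilon_n\, (B_n^{-1})\tr}{\nm B_n^{-1}\nm^{\, 3}}=\sigma^2\, H(I_{p^2}+K_{p,p})\bigl(K_p\otimes\Gamma\bigr)(I_{p^2}+K_{p,p})\tr H\tr\,=:\,\Upsilon,
\]
which coincides with the matrix to be defined in \eqref{LimVarRenorm} and is non-zero since $H$, $\Gamma$, $K_p$ are non-zero and the symmetrizer $I_{p^2}+K_{p,p}$ does not annihilate $K_p\otimes\Gamma$.

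The main obstacles are the bookkeeping of the vec/Kronecker/commutation-matrix identities in the middle step, and the uniform control of the truncation residual $\Gamma_n-V_n$ against $\nm B_n^{-1}\nm$; this last point is precisely what motivated the specific choice of the truncation rate $(m_n)$ in \eqref{RateMn}.
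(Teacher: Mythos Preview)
Your proof is correct and follows the same overall strategy as the paper: compute the mean via \eqref{RelGammaK}, establish uncorrelatedness by conditioning on the past, identify $\Upsilon_n$ explicitly, and pass to the limit using \ref{HypRC}. Two points deserve comment.

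First, your packaging of $\Upsilon_n$ via the commutation matrix $K_{p,p}$ is equivalent to, but more compact than, the paper's four-term expansion \eqref{VarVec}; indeed $K_{p,p}(K_p\otimes M)K_{p,p}=M\otimes K_p$ and $(K_p\otimes M)K_{p,p}=U_p\otimes M\otimes U_p\tr$ recover the paper's terms. Your observation that dividing \eqref{RelGammaK} by $\nm B_n^{-1}\nm$ yields $A\,\Gamma\,A\tr=\Gamma$ is a genuine simplification: the paper writes the limit with $\Gamma^{A}=A\,\Gamma\,A\tr$ in \eqref{LimVarRenorm} without noticing that $\Gamma^{A}=\Gamma$, so your final expression is the same matrix in slightly reduced form.

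Second, a small imprecision: the bound $\nm A_n^{\ell}\nm\leq c_n\,\mu_n^{\ell}$ you invoke from Section~\ref{SecComm} has a constant $c_n$ that depends on $n$, so it does not directly give the uniform tail control you claim. The clean fix is to use inequality \eqref{InegNormAn} from Lemma~\ref{LemEquivLn}, which gives $\nm A_n^{\ell}\nm\leq\cst\,\rho(A_n)^{\ell}$ for $n>n_0$ with a uniform constant; then $\nm\Gamma_n-V_n\nm\leq\cst\,\rho(A_n)^{2(m_n-1)}/(1-\rho(A_n)^2)=o(\nm B_n^{-1}\nm)$ as you need. The paper avoids this altogether by writing $\vec(V_n)=\sigma^2\,B_n^{-1}(I_{p^2}-(A_n\otimes A_n)^{m_n-1})\vec(K_p)$ and invoking Lemma~\ref{LemCvgPuissAn} directly.
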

\begin{proof}
We will use in what follows $K_{p}$ and $U_{p}$ defined in \eqref{MatKU}. Let $\cF_{k} = \sigma(\veps_{\ell},\, \ell \leq k)$ be the $\sigma$--algebra of the events occurring up to time $k$. Then, it is easy to see that
\begin{eqnarray*}
\dE[\vec(\zeta_{n,\, k})] & = & \dE[\, \dE[\vec(\zeta_{n,\, k})\, \vert\, \cF_{k-1}]\, ] \\
 & = & \sigma^2\, \vec(K_{p}) + \vec(A_{n}\, \Gamma_{n}\, A_{n}\tr - \Gamma_{n}) ~ = ~ 0
\end{eqnarray*}
in virtue of \eqref{RelGammaK}. For $k > j$, by direct calculation,
\begin{eqnarray*}
\dE[\vec(\zeta_{n,\, k})\, \vec\tr(\zeta_{n,\, j})] & = & \dE[\, \dE[\vec(\zeta_{n,\, k})\, \vec\tr(\zeta_{n,\, j})\, \vert\, \cF_{k-1}]\, ] \\
 & = & \dE[\, (\dE[\vec(A_{n}\, \Psi_{n,\, k-1}\, E_{k}\tr) + \vec(E_{k}\, \Psi_{n,\, k-1}\tr\, A_{n}\tr) \vert\, \cF_{k-1}] \\
 & & \hsp +~ \sigma^2\, \vec(K_{p}) + \vec(A_{n}\, \Gamma_{n}\, A_{n}\tr - \Gamma_{n}))\, \vec\tr(\zeta_{n,\, j}) ] ~ = ~ 0
\end{eqnarray*}
and the same is true for $j > k$ since $(\dE[\vec(\zeta_{n,\, k})\, \vec\tr(\zeta_{n,\, j})])\tr = \dE[\vec(\zeta_{n,\, j})\, \vec\tr(\zeta_{n,\, k})] = 0$. Now for $k = j$, a tedious but straightforward calculation leads to
\begin{eqnarray}
\label{VarVec}
\dE[\vec(\zeta_{n,\, k})\, \vec\tr(\zeta_{n,\, k})] & = & \sigma^2\, K_{p} \otimes (A_{n}\, \dE[\Psi_{n,\, k-1} \Psi_{n,\, k-1}\tr]\, A_{n}\tr) \nonumber \\
 & & \hsp +~ \sigma^2\, U_{p} \otimes (A_{n}\, \dE[\Psi_{n,\, k-1} \Psi_{n,\, k-1}\tr]\, A_{n}\tr) \otimes U_{p}\tr \nonumber \\
 & & \hsp +~ \sigma^2\, U_{p}\tr \otimes (A_{n}\, \dE[\Psi_{n,\, k-1} \Psi_{n,\, k-1}\tr]\, A_{n}\tr) \otimes U_{p} \nonumber \\
 & & \hsp +~ \sigma^2\, (A_{n}\, \dE[\Psi_{n,\, k-1} \Psi_{n,\, k-1}\tr]\, A_{n}\tr) \otimes K_{p} \nonumber \\
 & & \hsp +~ (\tau^4 - \sigma^4)\, \vec(K_{p})\, \vec\tr(K_{p}) ~ = ~ \Upsilon_{n}.
\end{eqnarray}
To give an explicit expression of $\Upsilon_{n}$, it suffices to observe that the truncated expression \eqref{PsiTrunc} has a variance given by
\begin{equation*}
\Gamma_{n,\, m_{n}} = \dE[\Psi_{n,\, k-1} \Psi_{n,\, k-1}\tr] = \sigma^2 \sum_{\ell = 0}^{m_{n}-2} A_{n}^{\ell}\, K_{p}\, (A_{n}\tr)^{\ell} 
\end{equation*}
so that
\begin{eqnarray*}
\vec( \Gamma_{n,\, m_{n}} ) & = & \sigma^2 \sum_{\ell = 0}^{m_{n}-2} (A_{n} \otimes A_{n})^{\ell}\, \vec(K_{p}) \\
 & = & \sigma^2\, B_{n}^{-1}\, (I_{p^2} - (A_{n} \otimes A_{n})^{m_{n}-1})\, \vec(K_{p}).
\end{eqnarray*}
Let us now look at the asymptotic behavior of $\Upsilon_{n}$ correctly renormalized. First, we have the convergence
\begin{equation*}
\limn (A_{n} \otimes A_{n})^{m_{n}-1} = 0
\end{equation*}
coming from the identity $(A_{n} \otimes A_{n})^{m_{n}-1} = A_{n}^{m_{n}-1} \otimes A_{n}^{m_{n}-1}$ and Lemma \ref{LemCvgPuissAn}. Together with \ref{HypRC}, this implies
\begin{equation*}
\limn \frac{\vec( \Gamma_{n,\, m_{n}} )}{\nm B_{n}^{-1} \nm} = \sigma^2\, H\, \vec(K_{p}).
\end{equation*}
In the end of the proof, we call $\vec^{-1}$ the vectorization inverse operator (namely, in our context, the reconstruction of a $p \times p$ matrix from its vectorization of size $p^2$). Then,
\begin{equation}
\label{LimGam}
\limn \frac{\Gamma_{n,\, m_{n}}}{\nm B_{n}^{-1} \nm} = \sigma^2\, \vec^{-1}(H\, \vec(K_{p})) = \Gamma.
\end{equation}
Combining \eqref{VarVec} with \eqref{LimGam} and \ref{HypRC}, we have
\begin{equation}
\label{LimVarRenorm}
\Upsilon = \sigma^2\, H\, (K_{p} \otimes \Gamma^{A} + U_{p} \otimes \Gamma^{A} \otimes U_{p}\tr  + U_{p}\tr \otimes \Gamma^{A} \otimes U_{p} + \Gamma^{A} \otimes K_{p})\, H\tr
\end{equation}
where $\Gamma^{A} = A\, \Gamma\, A\tr$.
\end{proof}

\begin{rem}
As a by-product, we also obtain, following the same lines,
\begin{equation*}
\limn \frac{\Gamma_{n}}{\nm B_{n}^{-1} \nm} = \Gamma
\end{equation*}
where $\Gamma_{n}$ is given in \eqref{Var}, which proves \eqref{LimGamGen}. The variance $\Gamma_{n,\, m_{n}}$ defined above may be seen as the truncated version of $\Gamma_{n}$.
\end{rem}

\subsubsection{The remainder of the truncation}
\label{SecResTrunc}

We denote by
\begin{equation}
\label{ResTrunc}
\Lambda_{n} = \frac{1}{n} \sum_{k=1}^{n} (A_{n}\, (\Phi_{n,\, k-1} - \Psi_{n,\, k-1})\, E_{k}\tr + E_{k}\, (\Phi_{n,\, k-1} - \Psi_{n,\, k-1})\tr\, A_{n}\tr)
\end{equation}
the remainder of the truncation of $\Delta_{n}$ in \eqref{Delta} made \textit{via} \eqref{ZetaTrunc}. Our last preliminary objective is to establish the following lemma.
\begin{lem}
\label{LemExpResTrunc}
Under hypotheses \ref{HypGI}--\ref{HypMD}, for all $r > 0$,
\begin{equation*}
\limn \frac{1}{b_{n}^{\, 2}} \ln \dP\bigg( \frac{\sqrt{n}\, (1 - \rho(A_{n}))^{\frac{3}{2}}}{b_{n}}\, \Vert B_{n}^{-1}\, \vec(\Lambda_{n}) \Vert \geq r \bigg) = -\infty.
\end{equation*}
\end{lem}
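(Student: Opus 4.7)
The plan is to exploit the explicit form of the truncation gap in order to expose a factor $A_{n}^{m_{n}}$ whose super-polynomial decay will beat the polynomial scalings at play. First, combining the causal expression \eqref{CausalForm} with the truncation \eqref{PsiTrunc}, a simple index shift yields
\begin{equation*}
\Phi_{n,\, k-1} - \Psi_{n,\, k-1} = \sum_{\ell\, =\, m_{n}-1}^{+\infty} A_{n}^{\ell}\, E_{k-1-\ell} = A_{n}^{m_{n}-1}\, \Phi_{n,\, k-m_{n}},
\end{equation*}
and plugging this into \eqref{ResTrunc} gives
\begin{equation*}
\Lambda_{n} = \frac{1}{n} \sum_{k=1}^{n} \big( A_{n}^{m_{n}}\, \Phi_{n,\, k-m_{n}}\, E_{k}\tr + E_{k}\, \Phi_{n,\, k-m_{n}}\tr\, (A_{n}^{m_{n}})\tr \big),
\end{equation*}
thereby exposing the small prefactor $A_{n}^{m_{n}}$. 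From inequality \eqref{InegNormA} in the proof of Lemma \ref{LemCvgPuissAn} together with the calibration \eqref{RateMn} ensuring $m_{n}\, (1-\rho(A_{n})) \to +\infty$, one has eventually $\nm A_{n}^{m_{n}} \nm \leq \cst\, \de^{-m_{n}(1 - \rho(A_{n}))}$, so that $\nm A_{n}^{m_{n}} \nm$ decays faster than any polynomial in $1/(1-\rho(A_{n}))$ and, \textit{a fortiori}, than any polynomial in $n$ as soon as $1-\rho(A_{n})$ decays polynomially.

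Second, using $\Vert \vec(\cdot) \Vert \leq \cst\, \nm \cdot \nm$ together with $\nm B_{n}^{-1} \nm \sim h/(1-\rho(A_{n}))$ from \ref{HypRC}, the statement reduces to showing that for every $r > 0$,
\begin{equation*}
\limn \frac{1}{b_{n}^{\, 2}} \ln \dP\bigg( \nm \Lambda_{n} \nm \geq \frac{r\, b_{n}}{\cst\, \sqrt{n}\, (1 - \rho(A_{n}))^{1/2}} \bigg) = -\infty.
\end{equation*}
Since $\Vert E_{k} \Vert = \vert \veps_{k} \vert$, the elementary inequality $2ab \leq a^2 + b^2$ yields
\begin{equation*}
\nm \Lambda_{n} \nm \leq \frac{\nm A_{n}^{m_{n}} \nm}{n}\, \sum_{k=1}^{n} \big( \Vert \Phi_{n,\, k-m_{n}} \Vert^2 + \veps_{k}^{\, 2} \big),
\end{equation*}
to which I would apply Markov's exponential inequality with parameter $\lambda_{n} = \alpha/L_{n}^{\, 2}$. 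Pushing the exponential inside the empirical average \textit{via} Jensen's inequality, and then invoking strict stationarity together with the independence between $\Phi_{n,\, k-m_{n}}$ and $\veps_{k}$, the exponential moment factorises as $\dE[\de^{\lambda_{n} \Vert \Phi_{n,\, 0} \Vert^2}]\, \dE[\de^{\lambda_{n} \veps_{1}^{\, 2}}]$ and both factors are bounded uniformly in $n$ thanks to Lemma \ref{LemExpInit} and \ref{HypGI}.

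Third, combining these estimates and using $L_{n}^{\, 2} \asymp (1-\rho(A_{n}))^{-2}$ from Lemma \ref{LemEquivLn}, the conclusion boils down to a bound of the form
\begin{equation*}
\dP\bigg( \nm \Lambda_{n} \nm \geq \frac{r\, b_{n}}{\cst\, \sqrt{n}\, (1 - \rho(A_{n}))^{1/2}} \bigg) \leq \cst\, \exp\bigg( -\cst\, \frac{b_{n}\, (1 - \rho(A_{n}))^{3/2}}{\sqrt{n}\, \nm A_{n}^{m_{n}} \nm} \bigg),
\end{equation*}
so that, dividing the logarithm by $b_{n}^{\, 2}$, it only remains to verify
\begin{equation*}
\frac{(1 - \rho(A_{n}))^{3/2}}{b_{n}\, \sqrt{n}\, \nm A_{n}^{m_{n}} \nm}\, \longrightarrow\, +\infty.
\end{equation*}
Using \ref{HypMD} to replace $b_{n} \sqrt{n}$ by $n\, (1-\rho(A_{n}))^{3/2 + \eta}$, this reduces to $n\, (1-\rho(A_{n}))^{\eta}\, \nm A_{n}^{m_{n}} \nm \to 0$. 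The main obstacle is precisely this last limit, and it rests entirely on the super-polynomial smallness of $\nm A_{n}^{m_{n}} \nm$ furnished by $m_{n}\, (1-\rho(A_{n})) = (1-\rho(A_{n}))^{-\eta/(3+2\eta)} \to +\infty$. The specific exponent $(3+3\eta)/(3+2\eta)$ in the calibration \eqref{RateMn} is chosen exactly so that the exponential decay of $\nm A_{n}^{m_{n}} \nm$ dominates the polynomial $n$-factor on the left while remaining compatible with the second limit in \eqref{PropMn} already needed to handle the main truncated sum.
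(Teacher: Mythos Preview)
Your decomposition $\Phi_{n,\, k-1} - \Psi_{n,\, k-1} = A_{n}^{m_{n}-1}\, \Phi_{n,\, k-m_{n}}$ is correct and the exponential-moment bookkeeping via Lemma~\ref{LemExpInit} is fine, but the approach breaks down at the very last step. After Jensen and Markov your bound reads, up to constants,
\begin{equation*}
\frac{1}{b_{n}^{\, 2}} \ln \dP(\cdots)\ \leq\ -\,\cst\, \frac{(1-\rho(A_{n}))^{3/2}}{b_{n}\,\sqrt{n}\,\nm A_{n}^{m_{n}} \nm}
\end{equation*}
and you then claim that $n\,(1-\rho(A_{n}))^{\eta}\,\nm A_{n}^{m_{n}} \nm \to 0$. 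This does \emph{not} follow from \ref{HypGI}--\ref{HypMD}. All you can extract from \eqref{InegNormA} and \eqref{RateMn} is $\nm A_{n}^{m_{n}} \nm \leq \cst\, \de^{-m_{n}(1-\rho(A_{n}))}$ with $m_{n}(1-\rho(A_{n})) \asymp (1-\rho(A_{n}))^{-\eta/(3+2\eta)}$. This decays faster than any power of $(1-\rho(A_{n}))^{-1}$, but the hypotheses put no polynomial link between $n$ and $(1-\rho(A_{n}))^{-1}$: they only force $(1-\rho(A_{n}))^{-1} = o(n^{1/(3+2\eta)})$. For instance, $1-\rho(A_{n}) = 1/\ln n$ satisfies all of \ref{HypCA}--\ref{HypMD} (with, say, $b_{n} = \ln\ln n$), yet then $m_{n}(1-\rho(A_{n})) \asymp (\ln n)^{\eta/(3+2\eta)} = o(\ln n)$ and $n\,\nm A_{n}^{m_{n}} \nm \to +\infty$. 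Your concluding sentence asserts exactly the point that fails.

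The reason the paper's argument succeeds is that it does not use the crude bound $2\,\vert a b \vert \leq a^{2}+b^{2}$ followed by Jensen, which throws away the martingale structure and costs a factor $\sqrt{n}$. Instead, the paper keeps the sums $\sum_{k=1}^{n} \veps_{k-\ell-m_{n}}\,\veps_{k}$ intact and applies a Bernstein-type maximal inequality (Lem.~17 of \cite{MasMenneteau03}) to each of them. Because these are martingale sums, they concentrate at scale $\sqrt{n}$, and the resulting exponent, after the $\ell$-summation, is governed by $\nm B_{n}^{-1} \nm^{1/2}/(M_{n}\,\nm A_{n}^{m_{n}} \nm) \asymp (1-\rho(A_{n}))^{3/2}/\nm A_{n}^{m_{n}} \nm$, with no factor of $n$ left over. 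That quantity diverges under the hypotheses alone. To repair your proof you would have to reinstate this martingale concentration step rather than bounding $\vert \veps_{k}\,\Phi_{n,\, k-m_{n}} \vert$ termwise.
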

\begin{proof}
Clearly, both terms in the definition of \eqref{ResTrunc} are similar and we will only work on the first one. From the causal expression \eqref{CausalForm} and the truncation \eqref{PsiTrunc}, we note that
\begin{eqnarray*}
\sum_{k=1}^{n} A_{n}\, (\Phi_{n,\, k-1} - \Psi_{n,\, k-1})\, E_{k}\tr & = & \sum_{k=1}^{n} \sum_{\ell=m_{n}-1}^{+\infty} A_{n}^{\ell+1}\, E_{k-1-\ell}\, E_{k}\tr \\
 & = & A_{n}^{m_{n}} \sum_{\ell=0}^{+\infty} A_{n}^{\ell}\, \sum_{k=1}^{n} E_{k-\ell-m_{n}}\, E_{k}\tr.
\end{eqnarray*}
Thus, with $M_{n}$ given in \eqref{Mn} and applying Lem. 17 of \cite{MasMenneteau03} under \ref{HypGI},
\begin{flalign}
\label{InegResExpTrunc}
& \dP\bigg( \frac{1}{n}\, \nmg \sum_{k=1}^{n} A_{n}\, (\Phi_{n,\, k-1} - \Psi_{n,\, k-1})\, E_{k}\tr \nmg \geq r\, \frac{b_{n}}{\sqrt{n}}\, \nm B_{n}^{-1} \nm^{\frac{1}{2}} \bigg) \nonumber \\
& \hsp \hsp \hsp \leq ~ \dP\bigg( \sum_{\ell=0}^{+\infty} \nm A_{n}^{\ell} \nm\, \Big\vert \sum_{k=1}^{n} \veps_{k-\ell-m_{n}}\, \veps_{k} \Big\vert \geq \sum_{\ell=0}^{+\infty} (\ell+1)\, \nm A_{n}^{\ell} \nm\, \frac{r\, b_{n}\, \sqrt{n}\, \nm B_{n}^{-1} \nm^{\frac{1}{2}}}{M_{n}\, \nm A_{n}^{m_{n}} \nm} \bigg) \nonumber \\
& \hsp \hsp \hsp \leq ~ \sum_{\ell=0}^{+\infty} \dP\bigg( \max_{1\, \leq\, j\, \leq\, n} \Big\vert \sum_{k=1}^{j} \veps_{k-\ell-m_{n}}\, \veps_{k} \Big\vert \geq \frac{r\, (\ell+1)\, b_{n}\, \sqrt{n}\, \nm B_{n}^{-1} \nm^{\frac{1}{2}}}{M_{n}\, \nm A_{n}^{m_{n}} \nm} \bigg) \nonumber \\
& \hsp \hsp \hsp \leq ~ \cst\, \sum_{\ell=0}^{+\infty} \exp\Big(\!-\frac{r^2\, b_{n}^{\, 2}\, n\, t_{n,\, \ell}^{\, 2}}{\alpha_0\, n + \beta_0\, r\, t_{n,\, \ell}\, b_{n}\, \sqrt{n}} \Big)
\end{flalign}
for some $\alpha_0 > 0$ and $\beta_0 > 0$, where
\begin{equation*}
t_{n,\, \ell} = \frac{(\ell+1)\, \nm B_{n}^{-1} \nm^{\frac{1}{2}}}{M_{n}\, \nm A_{n}^{m_{n}} \nm}.
\end{equation*}
Our choice of $m_{n}$ in \eqref{Mn}, the properties of Lemma \ref{LemEquivLn}, \eqref{InegNormA} and our hypotheses on the rates of convergence lead, for $n$ large enough, to
\begin{equation*}
\nm B_{n}^{-1} \nm^{-\frac{1}{2}}\, M_{n}\, \nm A_{n}^{m_{n}} \nm ~ \leq ~ \cst\, (1 - \rho(A_{n}))^{-\frac{3}{2}}\, \rho^{\, m_{n}}(A_{n}) ~ \longrightarrow ~ 0
\end{equation*}
and obviously $t_{n,\, \ell} \rightarrow +\infty$. Hence, like in formula (3.11) of \cite{MiaoWangYang15}, there are some constants $\alpha_0^{\, \prime} > 0$ and $\beta_0^{\, \prime} > 0$ such that, for all $\ell \geq 0$ and large values of $n$,
\begin{eqnarray*}
\frac{r^2\, n\, b_{n}^{\, 2}\, t_{n,\, \ell}^{\, 2}}{\alpha_0\, n + \beta_0\, r\, t_{n,\, \ell}\, b_{n}\, \sqrt{n}} & = & \frac{r^2\, (\ell+1)\, b_{n}^{\, 2}\, t_{n,\, \ell}}{\alpha_0\, \nm B_{n}^{-1} \nm^{-\frac{1}{2}}\, M_{n}\, \nm A_{n}^{m_{n}} \nm + r\, \beta_0\, (\ell + 1)\, \frac{b_{n}}{\sqrt{n}}} \\
 & \geq & b_{n}^{\, 2}\, t_{n,\, \ell}\, \frac{r^2}{\alpha_0^{\, \prime} + r\, \beta_0^{\, \prime}}.
\end{eqnarray*}
Going back to \eqref{InegResExpTrunc},
\begin{eqnarray*}
\sum_{\ell=0}^{+\infty} \exp\Big(\!-\frac{r^2\, b_{n}^{\, 2}\, n\, t_{n,\, \ell}^{\, 2}}{\alpha_0\, n + \beta_0\, r\, t_{n,\, \ell}\, b_{n}\, \sqrt{n}} \Big) & \leq & \sum_{\ell=0}^{+\infty} \exp\Big(\!- b_{n}^{\, 2}\, t_{n,\, \ell}\, \frac{r^2}{\alpha_0^{\, \prime} + r\, \beta_0^{\, \prime}} \Big) \\
 & = & \frac{\de^{-V_{n}}}{1 - \de^{-V_{n}}}
\end{eqnarray*}
where, for convenience, we note
\begin{equation*}
V_{n} = \frac{r^2\, b_{n}^{\, 2}\, \nm B_{n}^{-1} \nm^{\frac{1}{2}}}{M_{n}\, \nm A_{n}^{m_{n}} \nm\, (\alpha_0^{\, \prime} + r\, \beta_0^{\, \prime})} ~ \longrightarrow ~ +\infty.
\end{equation*}
To sum up,
\begin{flalign*}
& \frac{1}{b_{n}^{\, 2}} \ln \dP\bigg( \nmg \sum_{k=1}^{n} A_{n}\, (\Phi_{n,\, k-1} - \Psi_{n,\, k-1})\, E_{k}\tr \nmg \geq r\, b_{n}\, \sqrt{n}\, \nm B_{n}^{-1} \nm^{\frac{1}{2}} \bigg) \\
& \hsp \hsp \hsp \leq ~ \frac{\cst - \ln(1 - \de^{-V_{n}})}{b_{n}^{\, 2}} - \frac{V_{n}}{b_{n}^{\, 2}} \\
& \hsp \hsp \hsp \leq ~ \frac{\cst - \ln(1 - \de^{-V_{n}})}{b_{n}^{\, 2}} - \frac{r^2\, \nm B_{n}^{-1} \nm^{\frac{1}{2}}}{M_{n}\, \nm A_{n}^{m_{n}} \nm\, (\alpha_0^{\, \prime} + r\, \beta_0^{\, \prime})} ~ \longrightarrow ~ -\infty.
\end{flalign*}
This is clearly sufficient to finish the proof since, from \ref{HypRC},
\begin{eqnarray*}
\frac{\sqrt{n}\, (1 - \rho(A_{n}))^{\frac{3}{2}}}{b_{n}}\, \Vert B_{n}^{-1}\, \vec(\Lambda_{n}) \Vert & \leq & \cst\, \frac{\sqrt{n}}{b_{n}}\, \nm B_{n}^{-1} \nm^{-\frac{1}{2}}\, \Vert \vec(\Lambda_{n}) \Vert \\
 & \leq & \cst\, \frac{\sqrt{n}}{b_{n}}\, \nm B_{n}^{-1} \nm^{-\frac{1}{2}}\, \nm \Lambda_{n} \nm
\end{eqnarray*}
for $n$ large enough.
\end{proof}
\noindent We are now ready to prove Theorem \ref{ThmMDPCov} and Corollary \ref{CorMDPOLS}.

\subsubsection{Proof of Theorem \ref{ThmMDPCov}}
\label{SecProMDPCov}

All the technical results of the previous sections are now going to be concretely used. Consider the sequence
\begin{equation}
\label{XiTrunc}
\xi_{n,\, k} = \frac{B_{n}^{-1}\, \vec(\zeta_{n,\, k})}{\nm B_{n}^{-1} \nm^{\frac{3}{2}}}
\end{equation}
where $\zeta_{n,\, k}$ is given in \eqref{ZetaTrunc}. The process $(\xi_{n,\, k})_{k}$ is also strictly stationary and $m_{n}$--dependent. Like in \cite{MiaoYang08} or \cite[{suppl. mat.}]{MiaoWangYang15}, let us extract an independent sequence from this process. For $j \in \{ 1, \hdots, j_{n} \}$, define
\begin{equation*}
\xi^{\, \prime}_{n,\, j} = \xi_{n,\, (j-1) m_{n} + 1} + \hdots + \xi_{n,\, j m_{n}}
\end{equation*}
where $j_{n} = \lfloor \frac{n}{m_{n}} \rfloor$ and where $(m_{n})$ and its properties are given in \eqref{RateMn}. Then, $(\xi^{\, \prime}_{n,\, j})_{j}$ is strictly stationary and $1$--dependent. Next, for $t \in \{ 1, \hdots, t_{n} \}$, define
\begin{equation*}
\xi^{\, \prime \prime}_{n,\, t} = \xi^{\, \prime}_{n,\, (t-1) u_{n} + 1} + \hdots + \xi^{\, \prime}_{n,\, t u_{n}-1}
\end{equation*}
where $t_{n} = \lfloor \frac{j_{n}}{u_{n}} \rfloor$ and $(u_{n})$ is another rate satisfying
\begin{equation}
\label{PropUn}
\limn u_{n} = +\infty \hsp \text{and} \hsp \limn \frac{b_{n}\, \nm B_{n}^{-1} \nm^{\frac{1}{2}}\, (m_{n}\, u_{n})^{1+\frac{2 \eta}{3}}}{\sqrt{n}} = 0.
\end{equation}
To be convinced that such a rate exists, one can use \eqref{PropMn} and the fact that $\vert \ln f_{n} \vert \rightarrow +\infty$ and $f_{n}\, \vert \ln f_{n} \vert^{a} \rightarrow 0$ when $f_{n} \rightarrow 0$. The process $(\xi^{\, \prime \prime}_{n,\, t})_{t}$ is now i.i.d. and the rates satisfy
\begin{equation}
\label{PropTUM}
\limn \frac{t_{n}\, u_{n}\, m_{n}}{n} = 1.
\end{equation}
The reasoning of \cite[{suppl. mat.}]{MiaoWangYang15} does not suit us, so we need to reformulate the establishment of the MDP. First, by a Taylor-Lagrange expansion,
\begin{equation}
\label{TaylorExp}
\exp\Big( \Big\langle \lambda,\, \frac{b_{n}}{\sqrt{n}}\, \xi^{\, \prime \prime}_{n,\, 1} \Big\rangle \Big) = 1 + \frac{b_{n}}{\sqrt{n}}\, \langle \lambda,\,  \xi^{\, \prime \prime}_{n,\, 1} \rangle + \frac{b_{n}^{\, 2}}{2\, n}\, \langle \lambda,\,  \xi^{\, \prime \prime}_{n,\, 1} \rangle^2 + \frac{b_{n}^3}{6\, n^\frac{3}{2}}\, \langle \lambda,\, \xi^{\, \prime \prime}_{n,\, 1} \rangle^3\, \de^{\, \nu_{n}}
\end{equation}
in which the remainder term satisfies, for any $\alpha > 0$,
\begin{eqnarray*}
\de^{\, \alpha\, \nu_{n}} ~ < ~ \exp\Big( \frac{\alpha\, b_{n}}{\sqrt{n}}\, \vert \langle \lambda,\,  \xi^{\, \prime \prime}_{n,\, 1} \rangle \vert \Big) & \leq & \exp\Big( \frac{\alpha\, b_{n}}{\sqrt{n}}\, \Vert \lambda \Vert\, \sum_{\ell=1}^{m_{n} u_{n}} \Vert \xi_{n,\, \ell} \Vert \Big) \\
 & \leq & \exp\Big(\cst\, \frac{b_{n}}{\sqrt{n}}\, \nm B_{n}^{-1} \nm^{-\frac{1}{2}} \sum_{\ell=1}^{m_{n} u_{n}} \nm \zeta_{n,\, \ell} \nm \Big).
\end{eqnarray*}
Now, the random variables $\nm \zeta_{n,\, \ell} \nm$ sharing the same distribution for all $\ell \geq 0$, it follows from H\"older's inequality that,
\begin{eqnarray}
\label{TaylorExpRem}
\dE\big[ \de^{\, \alpha\, \nu_{n}} \big] & < & \dE \Big[ \exp\Big(\cst\, \frac{b_{n}\, m_{n}\, u_{n}}{\sqrt{n}}\, \nm B_{n}^{-1} \nm^{-\frac{1}{2}}\, \nm \zeta_{n,\, 1} \nm \Big) \Big] \nonumber \\
 & = & \dE \Big[ \exp\Big(\cst\, \frac{b_{n}\, \nm B_{n}^{-1} \nm^{\frac{1}{2}}\, m_{n}\, u_{n}}{\sqrt{n}}\, \nm B_{n}^{-1} \nm^{-1}\, \nm \zeta_{n,\, 1} \nm \Big) \Big] ~ < ~ +\infty
\end{eqnarray}
for $n$ large enough, using Lemma \ref{LemMomExpTrunc} with $m_{n}\, (1-\rho(A_{n})) \rightarrow +\infty$ stemming from \eqref{PropMn}, the convergence of $\nm A_{n} \nm$, \ref{HypGI} and treating all the terms of \eqref{ZetaTrunc} similarly. Taking the expectation in \eqref{TaylorExp} and exploiting the independence of the zero-mean process $(\xi^{\, \prime \prime}_{n,\, t})_{t}$, we obtain the decomposition
\begin{eqnarray}
\label{TaylorCGF}
\frac{1}{b_{n}^{\, 2}} \ln \dE\bigg[ \exp\Big( \Big\langle \lambda,\, \frac{b_{n}}{\sqrt{n}}\, \sum_{\ell=1}^{n} \xi_{n,\, \ell} \Big\rangle \Big) \bigg] & \sim & \frac{t_{n}}{b_{n}^{\, 2}}\, \ln \dE\bigg[ \exp\Big( \Big\langle \lambda,\, \frac{b_{n}}{\sqrt{n}}\, \xi^{\, \prime \prime}_{n,\, 1} \Big\rangle \Big) \bigg] \nonumber \\
 & = & \frac{t_{n}}{2\, n}\, \dE\big[ \langle \lambda,\,  \xi^{\, \prime \prime}_{n,\, 1} \rangle^2 \big] + O\bigg( \frac{t_{n}\, b_{n}}{6\, n^\frac{3}{2}}\, \big\vert \dE\big[ \langle \lambda,\, \xi^{\, \prime \prime}_{n,\, 1} \rangle^3\, \de^{\, \nu_{n}} \big] \big\vert \bigg)
\end{eqnarray}
for we can see, as it is done in \cite{MiaoYang08}, that the residual term
\begin{equation*}
\tau_{n} = \sum_{\ell=1}^{n} \xi_{n,\, \ell} - \sum_{\ell=1}^{t_{n}} \xi^{\, \prime \prime}_{n,\, \ell}
\end{equation*}
plays a negligible role in comparison to the main one. To eliminate the third-order term, we first look at the fourth-order moment of $\langle \lambda,\,  \xi^{\, \prime \prime}_{n,\, 1} \rangle$, that is
\begin{eqnarray*}
\dE\big[ \langle \lambda,\,  \xi^{\, \prime \prime}_{n,\, 1} \rangle^4 \big] & \leq & \frac{\cst\, \Vert \lambda \Vert^4}{\nm B_{n}^{-1} \nm^{2}}\, \dE\bigg[ \nmg \sum_{\ell=1}^{m_{n} u_{n}} \zeta_{n,\, \ell}\, \nmg^{4} \bigg].
\end{eqnarray*}
A long but standard calculation shows that
\begin{eqnarray*}
\dE\bigg[ \nmg A_{n} \sum_{\ell=1}^{m_{n} u_{n}} \Psi_{n,\, \ell-1}\, E_{\ell}\tr\, \nmg^{4} \bigg] & \leq & \cst\, \dE\bigg[ \Big\Vert \sum_{\ell=1}^{m_{n} u_{n}} \Psi_{n,\, \ell-1}\, \veps_{\ell}\, \Big\Vert^{4} \bigg] \\
 & = & O\big( (m_{n}\, u_{n}\, \nm B_{n}^{-1} \nm)^2 \big)
\end{eqnarray*}
as $n$ tends to infinity. This result is reached using the strict stationarity of the process, the explicit expression of $X_{n,\, 0}^4$ in terms of $A_{n}^{\ell}$, the inequality \eqref{InegNormA} and, finally, using \ref{HypRC} giving the equivalence between $(1-\rho(A_{n}))^{-2}$ and $\cst\, \nm B_{n}^{-1} \nm^2$. So,
\begin{equation*}
\dE\big[ \langle \lambda,\,  \xi^{\, \prime \prime}_{n,\, 1} \rangle^4 \big] = O(m_{n}^2\, u_{n}^2).
\end{equation*}
By Lyapunov's inequality,
\begin{equation*}
\dE\big[ \vert \langle \lambda,\,  \xi^{\, \prime \prime}_{n,\, 1} \rangle \vert^{3+\delta} \big]\, \leq\, \big( \dE\big[ \langle \lambda,\,  \xi^{\, \prime \prime}_{n,\, 1} \rangle^4 \big] \big)^{\frac{3+\delta}{4}}\, =\, O\Big( (m_{n}\, u_{n})^{\frac{3+\delta}{2}} \Big)
\end{equation*}
for a small $\delta > 0$. Now, combining this result with \eqref{TaylorExpRem} and H\"older's inequality, for sufficiently large values of $n$,
\begin{eqnarray}
\label{CvgTerm1}
\frac{t_{n}\, b_{n}}{n^\frac{3}{2}}\, \dE\big[ \vert \langle \lambda,\, \xi^{\, \prime \prime}_{n,\, 1} \rangle^3\, \de^{\, \nu_{n}} \vert \big] & \leq & \frac{t_{n}\, b_{n}}{n^\frac{3}{2}}\, \big( \dE\big[ \vert \langle \lambda,\, \xi^{\, \prime \prime}_{n,\, 1} \rangle \vert^{3+\delta} \big] \big)^{\frac{3}{3+\delta}}\, \big( \dE\big[ \de^{\frac{3+\delta}{\delta}\, \nu_{n}} \big] \big)^{\frac{\delta}{3+\delta}} \nonumber \\
 & \leq & \cst\, \frac{t_{n}\, b_{n}}{n^\frac{3}{2}}\, (m_{n}\, u_{n})^{\frac{3}{2}} ~ \longrightarrow ~ 0
\end{eqnarray}
by \eqref{TaylorExpRem}, \eqref{PropTUM} and the properties in \eqref{PropUn}. The second-order term in \eqref{TaylorCGF} satisfies
\begin{eqnarray}
\label{CvgTerm2}
\frac{t_{n}}{2\, n}\, \dE\big[ \langle \lambda,\,  \xi^{\, \prime \prime}_{n,\, 1} \rangle^2 \big] ~ = ~ \frac{t_{n}}{2\, n}\, \lambda\tr\, \dV(\xi^{\, \prime \prime}_{n,\, 1}) \lambda & = & \frac{t_{n}\, u_{n}\, m_{n}}{2\, n\, \nm B_{n}^{-1} \nm^3}\, \lambda\tr\, B_{n}^{-1}\, \dV(\vec(\zeta_{n,\, 1})) (B_{n}^{-1})\tr \lambda \nonumber \\
 & = & \frac{t_{n}\, u_{n}\, m_{n}}{2\, n}\, \lambda\tr\, \frac{B_{n}^{-1}\, \Upsilon_{n}\, (B_{n}^{-1})\tr}{\nm B_{n}^{-1} \nm^{\, 3}}\, \lambda \nonumber \\
 & \longrightarrow & \frac{1}{2}\, \langle \lambda, \Upsilon\, \lambda \rangle
\end{eqnarray}
where we used \eqref{PropTUM} and the results of Lemma \ref{LemVarTrunc}. The combination of \eqref{TaylorCGF}, \eqref{CvgTerm1} and \eqref{CvgTerm2} together with the G\"artner-Ellis theorem (see \textit{e.g.} Sec. 2.3 of \cite{DemboZeitouni98}) shows that the sequence
\begin{equation*}
\left( \frac{1}{b_{n}\, \sqrt{n}} \sum_{\ell=1}^{n} \xi_{n,\, \ell} \right)_{\!n\, \geq\, 1}
\end{equation*}
satisfies an LDP with speed $(b_{n}^{\, 2})$ and rate function given by the Fenchel-Legendre transform of the above logarithmic moment generating function, \textit{i.e.}
\begin{equation*}
I(x) = \sup_{\lambda\, \in\, \dR^{p^2}} \Big\{ \langle \lambda, x \rangle - \frac{1}{2}\, \langle \lambda, \Upsilon\, \lambda \rangle \Big\}.
\end{equation*}
Note that, due to its particular structure, $\Upsilon$ is only non-negative definite as soon as $p > 1$ (by way of example, its last row and column are zero). In that case (see \textit{e.g.} Ex. 1.1.4 of \cite{HiriartLemarechal12}, page 212), the explicit expression of this quadratic rate function, strictly convex on its relative interior, is
\begin{equation*}
I(x) = \left\{
\begin{array}{ll}
\frac{1}{2}\, \langle x, \Upsilon^{\, \dagger}\, x \rangle & \mbox{for } x \in \im(\Upsilon) \\
+\infty & \mbox{otherwise.}
\end{array}
\right.
\end{equation*}
After the truncation introduced in \eqref{PsiTrunc}, the decomposition \eqref{DecompVar} can be rewritten as
\begin{eqnarray*}
\frac{\sqrt{n}\, (1 - \rho(A_{n}))^{\frac{3}{2}}}{b_{n}}\, \vec\bigg( \frac{1}{n} \sum_{k=1}^{n} (\Phi_{n,\, k}\, \Phi_{n,\, k}\tr - \Gamma_{n}) \bigg) & = & \frac{(1 - \rho(A_{n}))^{\frac{3}{2}}\, \nm B_{n}^{-1} \nm^{\frac{3}{2}}}{b_{n}\, \sqrt{n}}\, \sum_{k=1}^{n} \xi_{n,\, k} \\
 & & \hsp +~ \frac{\sqrt{n}\, (1 - \rho(A_{n}))^{\frac{3}{2}}}{b_{n}}\, R^{\, *}_{n}
\end{eqnarray*}
where, in the remainder term $R^{\, *}_{n} = B_{n}^{-1}\, \vec(\Lambda_{n}) - R_{n}$, the residual of the truncation is given in \eqref{ResTrunc} and the main residual $R_{n}$ is given in \eqref{Res}. Lemma \ref{LemExpRes} and Lemma \ref{LemExpResTrunc} show that the first term in the right-hand is an exponentially good approximation of the left-hand side and that, as a consequence, they share the same LDP (see Def. 4.2.10 and Thm. 4.2.13 of \cite{DemboZeitouni98}). The contraction principle (see Thm. 4.2.1 of \cite{DemboZeitouni98}) enables to compute the rate function associated with the LDP, namely
\begin{equation}
\label{RateLDPCov}
I_{\Gamma}(x) = I\big( h^{-\frac{3}{2}}\, x \big) = \left\{
\begin{array}{ll}
\frac{1}{2\, h^3}\, \langle x, \Upsilon^{\, \dagger}\, x \rangle & \mbox{for } x \in \im(\Upsilon) \\
+\infty & \mbox{otherwise}
\end{array}
\right.
\end{equation}
where the limiting value $h > 0$ comes from \ref{HypRC}.
\hfill \qed

\subsubsection{Proof of Corollary \ref{CorMDPOLS}}
\label{SecProMDPOLS}

Using \eqref{OLSPen} and \eqref{VarPen},
\begin{eqnarray*}
\frac{\sqrt{n}}{b_{n}\, (1 - \rho(A_{n}))^{\frac{1}{2}}}\, \big( \whn^{\, \pi} - \theta_{n}^{\, \pi} \big) & = & \frac{\sqrt{n}\, (S_{n-1}^{\, \pi})^{-1}}{b_{n}\, (1 - \rho(A_{n}))^{\frac{1}{2}}}\, \sum_{k=1}^{n} \Phi_{n,\, k-1}\, \veps_{k} \\
 & = & \frac{n\, \nm B_{n}^{-1} \nm\, (S_{n-1}^{\, \pi})^{-1}}{b_{n}\, \sqrt{n}\, \nm B_{n}^{-1} \nm^{\frac{1}{2}}\, (1 - \rho(A_{n}))^{\frac{1}{2}}\, \nm B_{n}^{-1} \nm^{\frac{1}{2}}} \sum_{k=1}^{n} \Phi_{n,\, k-1}\, \veps_{k}.
\end{eqnarray*}
Our objective is first to prove that, for all $r > 0$,
\begin{equation}
\label{MDPOLS_CvgS}
\limn \frac{1}{b_{n}^{\, 2}} \ln \dP\bigg( \nm n\, \nm B_{n}^{-1} \nm\, (S_{n-1}^{\, \pi})^{-1} - \Gamma_{\pi}^{-1} \nm \geq r \bigg) = -\infty
\end{equation}
where $\Gamma_{\pi}$ is the invertible penalized variance \eqref{VarPen}, and then to establish an LDP for the sequence
\begin{equation}
\label{MDPOLS_Seq}
\left( \frac{1}{b_{n}\, \sqrt{n}\, \nm B_{n}^{-1} \nm^{\frac{1}{2}}} \sum_{k=1}^{n} \Phi_{n,\, k-1}\, \veps_{k} \right)_{\!n\, \geq\, 1}
\end{equation}
in order to obtain the announced result, \textit{via} the contraction principle (Thm. 4.2.1 of \cite{DemboZeitouni98}). On the one hand, we know from Theorem \ref{ThmMDPCov} and \eqref{RateLDPCov} that
\begin{eqnarray*}
\frac{1}{b_{n}^{\, 2}} \ln \dP\bigg( \nmg \frac{S_{n-1}}{n\, \nm B_{n}^{-1} \nm} - \frac{\Gamma_{n}}{\nm B_{n}^{-1} \nm} \nmg \geq r \bigg) & = & \frac{1}{b_{n}^{\, 2}} \ln \dP\bigg( \frac{\sqrt{n}}{b_{n}\, \nm B_{n}^{-1} \nm^{\frac{3}{2}}}\, \nmg \frac{S_{n-1}}{n} - \Gamma_{n} \nmg \geq r_{n} \bigg) \\
 & \longrightarrow & -\infty ~ = ~ -\lim_{\Vert x \Vert\, \rightarrow\, +\infty} I_{\Gamma}(x)
\end{eqnarray*}
since, by \ref{HypRC} and \ref{HypMD},
\begin{equation*}
r_{n} = \frac{r\, \sqrt{n}}{b_{n}\, \nm B_{n}^{-1} \nm^{\frac{1}{2}}} ~ \longrightarrow ~ +\infty
\end{equation*}
and $(1 - \rho(A_{n}))^{\frac{3}{2}} \sim h^{\frac{3}{2}}\, \nm B_{n}^{-1} \nm^{-\frac{3}{2}}$. So,
\begin{equation*}
\limn \frac{1}{b_{n}^{\, 2}} \ln \dP\bigg( \nmg \frac{S_{n-1}^{\, \pi}}{n\, \nm B_{n}^{-1} \nm} - \Gamma^{\, \pi}_{n} \nmg \geq r \bigg) = -\infty \hsp \text{for} \hsp \Gamma^{\, \pi}_{n} = \frac{\Gamma_{n}}{\nm B_{n}^{-1} \nm} + \pi\, I_{p}.
\end{equation*}
It is also clear that
\begin{equation*}
\bigg\{ \nmg \frac{S_{n-1}^{\, \pi}}{n\, \nm B_{n}^{-1} \nm} - \Gamma_{\pi} \nmg \geq r \bigg\} ~ \subset ~ \bigg\{ \nmg \frac{S_{n-1}^{\, \pi}}{n\, \nm B_{n}^{-1} \nm} - \Gamma^{\, \pi}_{n} \nmg \geq \frac{r}{2} \bigg\} \cup \bigg\{ \nm \Gamma^{\, \pi}_{n} - \Gamma_{\pi} \nm \geq \frac{r}{2} \bigg\}
\end{equation*}
and \eqref{LimGamGen} shows that the second event in the right-hand side becomes impossible when $n$ increases. Hence, from the reasoning above,
\begin{equation*}
\limn \frac{1}{b_{n}^{\, 2}} \ln \dP\bigg( \nmg \frac{S_{n-1}^{\, \pi}}{n\, \nm B_{n}^{-1} \nm} - \Gamma_{\pi} \nmg \geq r \bigg) = -\infty.
\end{equation*}
Now we shall use Lem. 2 of \cite{Worms99} to get \eqref{MDPOLS_CvgS}.

\smallskip

On the other hand, all the work consisting in proving that the sequence \eqref{MDPOLS_Seq} satisfies an LDP with speed $(b_{n}^{\, 2})$ has already been done in the proof of Theorem \ref{ThmMDPCov}. Indeed, \textit{via} the truncation \eqref{PsiTrunc},
\begin{eqnarray*}
\frac{1}{b_{n}\, \sqrt{n}\, \nm B_{n}^{-1} \nm^{\frac{1}{2}}} \sum_{k=1}^{n} \Psi_{n,\, k-1}\, \veps_{k} & = & \frac{1}{b_{n}\, \sqrt{n}\, \nm B_{n}^{-1} \nm^{\frac{1}{2}}} \sum_{k=1}^{n} \sum_{\ell=0}^{m_{n}-2} A_{n}^{\ell}\, E_{k-\ell-1}\, \veps_{k} \\
 & = & \frac{1}{b_{n}\, \sqrt{n}} \sum_{k=1}^{n} Z_{n,\, k}
\end{eqnarray*}
where the process $(Z_{n,\, k})_{k}$ forms a strictly stationary and $m_{n}$--dependent sequence. However, apart from the renormalization, this is precisely the first column of the first term of \eqref{ZetaTrunc}. Thus, the calculations are similar and we find, like in Lemma \ref{LemVarTrunc},
\begin{equation*}
\dV(Z_{n,\, 1}) = \frac{\sigma^2\, \Gamma_{n,\, m_{n}}}{\nm B_{n}^{-1} \nm}.
\end{equation*}
In that case, from the convergence \eqref{LimGam} and the previous proof, the rate function associated with the LDP is given by
\begin{equation*}
J(x) = \sup_{\lambda\, \in\, \dR^{p}} \Big\{ \langle \lambda, x \rangle - \frac{\sigma^2}{2}\, \langle \lambda, \Gamma\, \lambda \rangle \Big\} = \left\{
\begin{array}{ll}
\frac{1}{2\, \sigma^2}\, \langle x, \Gamma^{\, \dagger}\, x \rangle & \mbox{for } x \in \im(\Gamma) \\
+\infty & \mbox{otherwise.}
\end{array}
\right.
\end{equation*}
The exponential negligibility of the remainder of the truncation is obtained by following the lines of Lemma \ref{LemExpResTrunc}. The contraction principle enables to compute the rate function associated with the LDP, namely
\begin{equation}
\label{RateLDPOLS}
I_{\theta}(x) = J\big( \Gamma_{\pi} \sqrt{h}\, x \big) = \left\{
\begin{array}{ll}
\frac{h}{2\, \sigma^2}\, \langle x, \Gamma_{\pi}\, \Gamma^{\, \dagger}\, \Gamma_{\pi}\, x \rangle & \mbox{for } x \in \im(\Gamma_{\pi}^{-1}\, \Gamma) \\
+\infty & \mbox{otherwise}
\end{array}
\right.
\end{equation}
where the exponential convergence \eqref{MDPOLS_CvgS} has been combined to the LDP established on the sequence \eqref{MDPOLS_Seq}.
\hfill \qed

\smallskip

\noindent \textbf{Acknowledgements}. The author thanks the associate editor and the two anonymous reviewers for the numerous comments and suggestions that clearly helped to improve the paper. He also thanks R. Garbit for the constructive discussion about the link between Vandermonde matrices and Lagrange polynomials.

\nocite{*}

\bibliographystyle{acm}
\bibliography{MDP_NearlyUnstable}

\begin{thebibliography}{10}

\bibitem{Bercu01}
{\scshape Bercu, B.}
\newblock On large deviations in the {G}aussian autoregressive process: stable,
  unstable and explosive cases.
\newblock {\em Bernoulli. 7\/} (2001), 299--316.

\bibitem{BercuGamboaRouault97}
{\scshape Bercu, B., Gamboa, F., and Rouault, A.}
\newblock Large deviations for quadratic forms of stationary {G}aussian
  processes.
\newblock {\em Stoch. Proc. Appl. 71\/} (1997), 75--90.

\bibitem{BitsekiDjelloutProia13}
{\scshape Bitseki~Penda, V., Djellout, H., and Pro\"ia, F.}
\newblock Moderate deviations for the {D}urbin-{W}atson statistic related to
  the first-order autoregressive process.
\newblock {\em ESAIM Probab. Stat. 18\/} (2014), 308--331.

\bibitem{BrockwellDavis91}
{\scshape Brockwell, P.~J., and Davis, R.~A.}
\newblock {\em Time series: Theory and Methods (Second Edition)}.
\newblock Springer Series in Statistics. Springer, New York, 1991.

\bibitem{BuchmannChan13}
{\scshape Buchmann, B., and Chan, N.~H.}
\newblock Unified asymptotic theory for nearly unstable {AR}$(p)$ processes.
\newblock {\em Stoch. Proc. Appl. 123\/} (2013), 952--985.

\bibitem{ChanWei87}
{\scshape Chan, N.~H., and Wei, C.~Z.}
\newblock Asymptotic inference for nearly nonstationary {AR}$(1)$ processes.
\newblock {\em Ann. Stat. 15\/} (1987), 1050--1063.

\bibitem{ChanWei88}
{\scshape Chan, N.~H., and Wei, C.~Z.}
\newblock Limiting distributions of least squares estimates of unstable
  autoregressive processes.
\newblock {\em Ann. Statist. 16\/} (1988), 367--401.

\bibitem{DemboZeitouni98}
{\scshape Dembo, A., and Zeitouni, O.}
\newblock {\em Large Deviations Techniques and Applications (Second Edition)},
  vol.~38 of {\em Applications of Mathematics}.
\newblock Springer, 1998.

\bibitem{DjelloutGuillinWu06}
{\scshape Djellout, H., Guillin, A., and Wu, L.}
\newblock Moderate deviations of empirical periodogram and non-linear
  functionals of moving average processes.
\newblock {\em Ann. I. H. Poincar\'e. 42\/} (2006), 393--416.

\bibitem{DonskerVaradhan85}
{\scshape Donsker, M.~D., and Varadhan, S. R.~S.}
\newblock Large deviations for stationary {G}aussian processes.
\newblock {\em Comm. Math. Phys. 97\/} (1985), 187--210.

\bibitem{Duflo97}
{\scshape Duflo, M.}
\newblock {\em Random iterative models}.
\newblock Applications of Mathematics (vol. 34), New York. Springer-Verlag,
  Berlin, 1997.

\bibitem{HiriartLemarechal12}
{\scshape Hiriart-Urruty, J.~B., and Lemar\'echal, C.}
\newblock {\em Fundamentals of Convex Analysis}.
\newblock Grundlehren Text Editions. Springer, 2012.

\bibitem{HornJohnson92}
{\scshape Horn, R.~A., and Johnson, C.~R.}
\newblock {\em Matrix Analysis (Second Edition)}.
\newblock Cambridge University Press, Cambridge, New-York, 2012.

\bibitem{JiangWei03}
{\scshape Jiang, T., and Wei, M.}
\newblock On solutions of the matrix equations ${X-AXB = C}$ and ${X-A\bar{X}B
  = C}$.
\newblock {\em Linear Algebra Appl. 367\/} (2003), 225--233.

\bibitem{MasMenneteau03}
{\scshape Mas, A., and Menneteau, L.}
\newblock Large and moderate deviations for infinite-dimensional autoregressive
  processes.
\newblock {\em J. Multivariate Anal. 87\/} (2003), 241--260.

\bibitem{MiaoShen09}
{\scshape Miao, Y., and Shen, S.}
\newblock Moderate deviation principle for autoregressive processes.
\newblock {\em J. Multivariate Anal. 100\/} (2009), 1952--1961.

\bibitem{MiaoWangYang15}
{\scshape Miao, Y., Wang, Y., and Yang, G.}
\newblock Moderate deviation principles for empirical covariance in the
  neighbourhood of the unit root.
\newblock {\em Scand. J. Stat. 42\/} (2015), 234--255.

\bibitem{MiaoYang08}
{\scshape Miao, Y., and Yang, G.}
\newblock A moderate deviation principle for $m$--dependent random variables
  with unbounded $m$.
\newblock {\em Acta Appl. Math. 104\/} (2008), 191--199.

\bibitem{PhillipsLee15}
{\scshape Phillips, P. C.~B., and Lee, J.~H.}
\newblock Limit theory for vars with mixed roots near unity.
\newblock {\em Economet. Rev. 34\/} (2015), 1034--1055.

\bibitem{PhillipsMagdalinos07}
{\scshape Phillips, P. C.~B., and Magdalinos, T.}
\newblock Limit theory for moderate deviations from a unit root.
\newblock {\em J. Econometrics. 136\/} (2007), 115--130.

\bibitem{Worms99}
{\scshape Worms, J.}
\newblock Moderate deviations for stable {M}arkov chains and regression models.
\newblock {\em Electron. J. Probab. 4\/} (1999), 1--28.

\bibitem{WuZhao08}
{\scshape Wu, W.~B., and Zhao, Z.}
\newblock Moderate deviations for stationary processes.
\newblock {\em Stat. Sinica. 18\/} (2008), 769--782.

\end{thebibliography}

\end{document}